\providecommand{\U}[1]{\protect\rule{.1in}{.1in}}
\newtheorem{theorem}{Theorem}
\newtheorem{corollary}[theorem]{Corollary}
\newtheorem{definition}[theorem]{Definition}
\newtheorem{lemma}[theorem]{Lemma}
\newtheorem{proposition}[theorem]{Proposition}
\theoremstyle{remark}
\begin{document}

\title{Multivariate Alexander quandles, III. Sublinks}
\author{Lorenzo Traldi\\Lafayette College\\Easton, PA 18042, USA\\traldil@lafayette.edu
}
\date{ }
\maketitle

\begin{abstract}
If $L$ is a classical link then the multivariate Alexander quandle, $Q_A(L)$, is a substructure of the multivariate Alexander module, $M_A(L)$. In the first paper of this series we showed that if two links $L$ and $L'$ have $Q_A(L) \cong Q_A(L')$, then after an appropriate re-indexing of the components of $L$ and $L'$, there will be a module isomorphism $M_A(L) \cong M_A(L')$ of a particular type, which we call a ``Crowell equivalence.'' In the present paper we show that $Q_A(L)$ (up to quandle isomorphism) is a strictly stronger link invariant than $M_A(L)$ (up to re-indexing and Crowell equivalence). This result follows from the fact that $Q_A(L)$ determines the $Q_A$ quandles of all the sublinks of $L$, up to quandle isomorphisms.

\emph{Keywords}: Alexander module; link module sequence; quandle.

Mathematics Subject Classification 2010: 57M25
\end{abstract}

\section{Introduction}

In this paper we continue to investigate the connections between two kinds of algebraic invariants of classical links, Alexander modules and quandles. Before stating our results we recall some properties of these invariants.

The basic theory of multivariate Alexander modules was developed over a period of sixty years or so, starting with Alexander's introduction of the reduced (one-variable) polynomial invariants that bear his name \cite{A}. Multivariate versions of Alexander's polynomial invariants were studied by Fox and his students; see \cite{F,To} and works cited there. If $L=K_1 \cup \dots \cup K_{\mu}$ is a classical link of $\mu$ components, then these multivariate invariants are called the elementary ideals of $L$; they are ideals of the ring $\Lambda_{\mu}=\mathbb{Z}[t_1^{\pm1},\dots,t_{\mu}^{\pm1}]$ of Laurent polynomials in the variables $t_1,\dots,t_{\mu}$, with integer coefficients. The elementary ideals are associated with homology groups $H_1(\widetilde X)$ and $H_1(\widetilde X,F)$, where $\widetilde X$ is the universal abelian covering space of $X=\mathbb S ^3 - L$, $F$ is the cover's fiber, and the homology groups are considered as $\Lambda_{\mu}$-modules with scalar multiplications derived from the cover's deck transformations. (In particular, scalar multiplication by $t_i$ is associated with the element of $\pi_1(X)$ represented by a meridian of $K_i$.)  Crowell \cite{C1, C3, CS} observed that the long exact homology sequence of $(\widetilde X,F)$ yields a short exact \emph{link module sequence}
\begin{equation}
\label{lms}
0 \to \ker \phi_L \xrightarrow{\psi} M_A(L) \xrightarrow{\phi_L} I_{\mu} \to 0 \text{,}
\end{equation}
where $M_A(L)$ is the \emph{Alexander module} of $L$ (i.e., $H_1(\widetilde X,F)$), $I_{\mu}$ is the \emph{augmentation ideal} of $\Lambda_{\mu}$ (i.e., the ideal generated by $\{t_1-1, \dots, t_{\mu}-1\}$), and $\psi$ is the inclusion map. The module $\ker \phi_L$ (i.e., $H_1(\widetilde X)$) is the \emph{Alexander invariant} of $L$. We refer the reader to \cite{F, H, L, Ro} for more information, with the warning that terminology in the references is not consistent. For instance, Lickorish \cite{L} used the term ``Alexander module'' for what we call the reduced (or one-variable) version of the Alexander invariant, obtained by replacing $\widetilde X$ with the infinite cyclic cover $X_{\infty}$ of $X$. ($X_{\infty}$ is also called the ``total linking number'' cover of $X$.) The corresponding homology groups are modules over the ring $\Lambda=\mathbb{Z}[t^{\pm1}]$ of Laurent polynomials in the variable $t$, with integer coefficients. To be clear, we always use ``reduced'' to refer to modules and maps associated with the infinite cyclic cover rather than the universal abelian cover, and we sometimes use ``multivariate'' to refer to (\ref{lms}) and the $\Lambda_{\mu}$-modules that appear in it.

Most discussions of these invariants focus on either the Alexander invariant or the Alexander module, rather than the link module sequence. For knots there is little significant difference, because the link module sequence splits. In general, though, it seems possible that the link module sequence is a more sensitive link invariant than either the Alexander invariant or the Alexander module. (We do not know of examples that confirm this possibility.) For this reason, we focus on the sequence rather than either of the individual modules. 

The link module sequence of $L$ is determined by the homomorphism $\phi_L:M_A(L) \to I_{\mu}$. This observation motivates the following definition of Crowell \cite{C1}.

\begin{definition}
\label{Crowelleq}
The link module sequences of two $\mu$-component links $L$ and $L'$ are \emph{equivalent} if there is a $\Lambda_{\mu}$-linear isomorphism $f:M_A(L) \to M_A(L')$ such that $\phi_L=\phi_{L'} f:M_A(L) \to I_{\mu}$.
\end{definition}

If $f$ satisfies Definition \ref{Crowelleq} then we say that $f$ is a \emph{Crowell equivalence}, and $L$ and $L'$ are \emph{Crowell equivalent}. 
\begin{proposition} These three properties hold.
\label{lmsprop}
\begin{enumerate} [1.]
    \item Re-indexing the components of $L=K_1 \cup \dots \cup K_{\mu}$ may result in a link that is not Crowell equivalent to $L$. 
    \item If $L=K_1 \cup \dots \cup K_{\mu}$ and $L'=K'_1 \cup \dots \cup K'_{\mu}$ are Crowell equivalent, they may have sublinks $L=K_1 \cup \dots \cup K_{m}$ and $L'=K'_1 \cup \dots \cup K'_{m}$ that are not Crowell equivalent.
    \item If $L=K_1 \cup \dots \cup K_{\mu}$ and $L'=K'_1 \cup \dots \cup K'_{\mu}$ are not Crowell equivalent, then their reduced (one-variable) link module sequences may be equivalent.
\end{enumerate}
\end{proposition}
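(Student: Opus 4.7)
The plan is to prove each part by exhibiting an explicit example, since each is an existence claim about links with prescribed pathological behavior.

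For (1), I would choose a link $L = K_1 \cup K_2$ whose multivariate Alexander module is asymmetric under the swap $t_1 \leftrightarrow t_2$—for instance, a split union of the unknot and the trefoil, indexed so that the trefoil is $K_2$. Then one variable carries the trefoil's Alexander polynomial while the other does not. Re-indexing produces $L^{\sigma}$ in which these roles are reversed; since $\phi$ sends the meridian class of the $i$-th component to $t_i - 1$, any Crowell equivalence would have to be a $\Lambda_{\mu}$-linear isomorphism compatible with both variable assignments, which is incompatible with the asymmetric structure.

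For (3), I would exhibit two links whose multivariate modules are non-isomorphic (hence not Crowell equivalent) but whose reductions under $t_i \mapsto t$ coincide. The reduction map $\Lambda_{\mu} \twoheadrightarrow \Lambda$ collapses multivariate polynomial information, and classical Torres-type computations readily furnish pairs distinguished only by ``off-diagonal'' multivariate terms that vanish after reduction.

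For (2), the subtlest claim, the approach is to construct $\mu$-component links $L$ and $L'$ whose full multivariate modules are Crowell equivalent, but such that deleting the same component from each yields sublinks with non-equivalent modules. The Alexander module of a sublink is not recovered from the module of the full link by any algebraic specialization—removing a component alters the underlying covering space—so there is no a priori reason for Crowell equivalence to descend to sublinks. The main obstacle lies here: producing a concrete pair requires simultaneously controlling the full module (to establish Crowell equivalence) and a sublink module (to witness inequivalence), which I would expect to accomplish via a diagrammatic construction—perhaps a local tangle modification that leaves the full module invariant but alters the effect of deleting an adjacent component.
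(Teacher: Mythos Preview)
Your structural plan matches the paper's: each part is an existence claim, and the proof is an explicit example. But what you have written is a description of the properties the examples should have, not the examples themselves. For existence statements the example \emph{is} the proof, and for part (2) in particular you have only said ``I would expect to accomplish via a diagrammatic construction---perhaps a local tangle modification''; that is a hope, not an argument.

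The paper's proof of (2) is exactly the concrete work you have not done: it exhibits Whitehead's link $W$ and the link $7^2_8$, computes module presentations of $M_A(W)$ and $M_A(7^2_8)$ from diagrams, reduces each to two generators with a single identical relation, and writes down an explicit $\Lambda_2$-isomorphism that respects $\phi$. The sublinks are then distinguished trivially, since one component of $7^2_8$ is a trefoil while both components of $W$ are unknotted. The substance lies entirely in the presentation-matching computation, which your proposal does not attempt.

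A smaller issue with (1): the split union of an unknot and a trefoil is a legitimate candidate, but your stated mechanism---that ``one variable carries the trefoil's Alexander polynomial while the other does not''---does not go through at the level of the multivariate Alexander polynomial, which vanishes for any split link with $\mu\geq 2$. You would need to argue directly with the module (or with higher elementary ideals), and that requires more than you have written. The paper sidesteps this by simply invoking any link whose multivariate Alexander polynomial is genuinely changed by permuting the $t_i$; non-split examples abound. For (3) the paper, like you, only gestures at examples, citing the predecessor paper for specifics.
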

\begin{proof}
The first property is illustrated by any link whose multivariate Alexander polynomial is changed by re-indexing the variables $t_1,\dots,t_{\mu}$. Examples illustrating the second property are given in Sec. \ref{examples} below, and examples illustrating the third property were mentioned in \cite{mvaq1}. 
\end{proof}

Examples of property 2 of Proposition \ref{lmsprop} are not hard to find. While preparing this paper we looked for pairs of links with the same elementary ideals whose sublinks are distinguished by their own elementary ideals, and then checked to see if the links are Crowell equivalent. The first three such pairs we analyzed, $\{5^2_1,7^2_8\}$, $\{7^2_3,9^2_{46}\}$ and $\{6^3_2,9^3_{18}\}$ in Rolfsen's table \cite[Appendix C]{Ro}, all exemplify property 2 of Proposition \ref{lmsprop}.

Quandles are algebraic invariants introduced to classical knot theory by Joyce \cite{J} and Matveev \cite{M} in the early 1980s. If $L$ is a link then the fundamental quandle $Q(L)$ is a subset of the link group $\pi_1(\mathbb S ^3 - L)$; it is the union of the conjugacy classes of the meridians, considered as an algebraic system with an operation defined by conjugation in $\pi_1(\mathbb S ^3 - L)$. Joyce and Matveev both observed that the reduced (one-variable) version of the Alexander module can be considered as a quandle, with the quandle operation derived from the module structure. This kind of quandle is usually called an ``Alexander quandle'' in the literature; we refer to it as a \emph{standard} Alexander quandle. Notice that a standard Alexander quandle is an entire $\Lambda$-module; in contrast, a link's fundamental quandle is a proper subset of the link's group.

A different kind of quandle associated to an Alexander module was introduced in the first paper in this series \cite{mvaq1}. The fundamental multivariate Alexander quandle $Q_A(L)$ is a subset of the multivariate Alexander module $M_A(L)$. If $L=K_1 \cup \dots \cup K_{\mu}$ then $Q_A(L)$ has $\mu$ orbits, one for each component $K_i$. It follows that $Q_A(L)$ determines the number $\mu$; however, there is no way to tell which orbit corresponds to which component, using only information from $Q_A(L)$ itself. That is, if we permute the component indices in $L$ then $Q_A(L)$ is unchanged.

\begin{definition}
Suppose $L=K_1 \cup \dots \cup K_{\mu}$ and $L'=K'_1 \cup \dots \cup K'_{\mu}$ are classical links, $f:Q_A(L) \to Q_A(L')$ is a quandle isomorphism, and for each $i \in \{1, \dots, \mu\}$, the image under $f$ of the $K_i$ orbit of $Q_A(L)$ is the $K'_i$ orbit of $Q_A(L')$. Then we say that $L$ and $L'$ are \emph{indexed compatibly with }$f$. \end{definition}

As shown in \cite{mvaq1}, once we know which $K_i$ corresponds to each orbit in $Q_A(L)$ we can use $Q_A(L)$ to construct a presentation of $M_A(L)$ as a $\Lambda_{\mu}$-module. This module presentation also determines the map $\phi_L$. We state some consequences of these properties as a proposition, for ease of reference.

\begin{proposition}
\label{backref}
(\cite{mvaq1}) Suppose $L=K_1 \cup \dots \cup K_{\mu}$ and $L'=K'_1 \cup \dots \cup K'_{\mu'}$ are classical links, and $f:Q_A(L) \to Q_A(L')$ is an isomorphism. Then these three properties hold.
\begin{enumerate} [(a)]
    \item $\mu=\mu'$.
    \item The components of $L$ and $L'$ can be re-indexed compatibly with $f$.
    \item Once the components are indexed compatibly with $f$, $f$ will extend to a Crowell equivalence $g:M_A(L) \to M_A(L')$.
\end{enumerate}
\end{proposition}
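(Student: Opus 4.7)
The plan is to leverage the orbit structure of $Q_A(L)$ together with the construction from \cite{mvaq1}, recalled in the paragraph just before the proposition, that reads a $\Lambda_{\mu}$-module presentation of $M_A(L)$ (and the map $\phi_L$) off a labeled copy of $Q_A(L)$.

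Parts (a) and (b) should follow directly from orbit-counting. Since $Q_A(L)$ has exactly $\mu$ orbits, one per component, and any quandle isomorphism induces a bijection on orbit sets, $f$ forces $\mu = \mu'$, which is (a). Let $\sigma$ be the induced permutation of $\{1,\dots,\mu\}$, defined by $\sigma(i) = j$ when $f$ carries the $K_i$-orbit of $Q_A(L)$ to the $K'_j$-orbit of $Q_A(L')$; re-indexing $L'$ by $\sigma^{-1}$ (which leaves $Q_A(L')$ unchanged as a quandle, altering only the labeling of its orbits) gives indexings compatible with $f$, which is (b).

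The substance is (c). Once the components are indexed compatibly, I would invoke the construction recalled from \cite{mvaq1}: from the labeled quandle $Q_A(L)$ one obtains a $\Lambda_{\mu}$-presentation of $M_A(L)$ whose generators are the elements of $Q_A(L)$, whose relations encode the quandle operation, and whose associated augmentation map sends every generator in the $K_i$-orbit to $t_i - 1 \in I_{\mu}$. Because $f$ is a quandle isomorphism that (after the re-indexing of (b)) carries each $K_i$-orbit to the corresponding $K'_i$-orbit, it transports the defining relations for $M_A(L)$ to those for $M_A(L')$, so $f$ extends uniquely to a $\Lambda_{\mu}$-linear map $g:M_A(L) \to M_A(L')$; the analogous extension of $f^{-1}$ provides an inverse, so $g$ is an isomorphism. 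On each generator $x$ in the $K_i$-orbit, $\phi_{L'}(g(x)) = t_i - 1 = \phi_L(x)$, so $\phi_L = \phi_{L'} g$ on all of $M_A(L)$, making $g$ a Crowell equivalence.

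The main obstacle is the functoriality step in (c): one must verify that a bare quandle isomorphism preserves all the data the \cite{mvaq1} construction reads off, and in particular that the scalar action of each $t_i$, which is encoded on $Q_A(L)$ only implicitly through self-action inside the $K_i$-orbit, is transported correctly by $f$. This should reduce to careful inspection of the construction in \cite{mvaq1} combined with the orbit-compatibility of $f$, but it is where the algebraic content of the proposition is concentrated.
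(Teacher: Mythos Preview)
The paper does not supply its own proof of this proposition: it is quoted from \cite{mvaq1} and used as input, with the surrounding text only recalling that once the orbits of $Q_A(L)$ are labeled by components one can read off a $\Lambda_{\mu}$-presentation of $M_A(L)$ together with $\phi_L$. So there is nothing in this paper to compare your argument against line by line.

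That said, your sketch is sound and is exactly the shape the argument in \cite{mvaq1} must take. Parts (a) and (b) are immediate from orbit-counting, as you say. For (c), your worry about whether a bare quandle isomorphism transports the implicit $t_i$-action is already answered by the re-indexing in (b): the relations in the \cite{mvaq1} presentation have the form $x\triangleright y = t_{\kappa(y)}x - (t_{\kappa(x)}-1)y$, where $\kappa$ records the orbit index; since $f$ respects both $\triangleright$ and (after re-indexing) the function $\kappa$, it carries each such relation in $M_A(L)$ to the corresponding relation in $M_A(L')$, and likewise for $\triangleright^{-1}$. Thus the ``obstacle'' you flag dissolves once (b) is in place, and the extension $g$ and the identity $\phi_{L'}g=\phi_L$ follow exactly as you outline.
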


Proposition \ref{backref} includes the implication $2 \implies 3$ of the main theorem of \cite{mvaq1}. The proposition is useful because quandles are rather intractable, compared to other kinds of algebraic structures. When trying to determine whether two links $L$ and $L'$ have $Q_A(L) \cong Q_A(L')$, instead of working directly with $Q_A(L)$ and $Q_A(L')$ it is much easier to first determine whether $M_A(L) \cong M_A(L')$, perhaps after re-indexing of the links' components, and if so, to then determine whether any $\Lambda_{\mu}$-module isomorphism between $M_A(L)$ and $M_A(L')$ is a Crowell equivalence. After establishing that $L$ and $L'$ are Crowell equivalent, one can then look for Crowell equivalences that map $Q_A(L)$ onto $Q_A(L')$. An example of this sort of analysis is given in Secs.\ \ref{examples} and \ref{moreexamples}. 

We are now ready to state the two central results of the present paper. The first result is that $Q_A(L)$ determines the $Q_A$ quandles of all the sublinks of $L$. To state this property precisely, we use the convention that if $L=K_1 \cup \dots \cup K_{\mu}$ and $S \subseteq \{1,  \dots, \mu \}$ then $L_S$ denotes the sublink of $L$ consisting of components with indices from $S$.

\begin{theorem}
\label{main}
Suppose $L=K_1 \cup \dots \cup K_{\mu}$, $L'=K'_1 \cup \dots \cup K'_{\mu'}$ and $f:Q_A(L) \cong Q_A(L')$. Proposition \ref{backref} tells us that $\mu=\mu'$, and $L$ and $L'$ may be re-indexed compatibly with $f$. After such a re-indexing, it will be true that $Q_A(L_S) \cong Q_A(L'_S)$ $\forall S \subseteq \{1,  \dots, \mu \}$.
\end{theorem}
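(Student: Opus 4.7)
The plan is to reconstruct $Q_A(L_S)$ from the orbit-labeled quandle $Q_A(L)$ by an intrinsic construction that uses only the quandle operation and the orbit-to-component labeling; once this construction is naturally respected by orbit-preserving quandle isomorphisms, the conclusion follows. After Proposition \ref{backref}(b) re-indexes $L$ and $L'$ compatibly with $f$, the issue reduces to exhibiting such a construction.

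The central tool is an equivalence relation $\sim_S$ on $Q_A(L)$, defined as the smallest quandle congruence satisfying $a \sim_S (a \ast c)$ whenever $a \in Q_A(L)$ and $c$ lies in an orbit indexed by some $j \notin S$, where $\ast$ denotes the quandle operation. Geometrically, the intent is to formalize the fact that in the sub-diagram for $L_S$, two under-arcs meeting at a crossing whose over-arc sits on a deleted component become a single arc; algebraically, since $a \ast c = t_j a + (1-t_j)c$ in the module picture, this operation becomes trivial when $t_j = 1$. A short check shows $\sim_S$ preserves orbit labels (each generating pair sits in one orbit, and the quandle operation keeps the first argument's orbit invariant), so $Q_A(L)/{\sim_S}$ inherits a labeling by $\{1,\ldots,\mu\}$ and the sub-quandle $Q^*_S \subseteq Q_A(L)/{\sim_S}$ on the orbits indexed by $S$ is well defined.

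Next, I will identify $Q^*_S$ with $Q_A(L_S)$ by comparing Wirtinger presentations. Fix a diagram $D$ of $L$ and let $D_S$ be the sub-diagram for $L_S$, so that the arcs of $D_S$ are the $S$-arcs of $D$ modulo the identification of $a_l$ with $a_m$ whenever these are separated only by a crossing whose over-arc $a_k$ sits on a deleted component. At every such crossing the Wirtinger relation reads $a_m = a_l \ast a_k$, and since $a_k$ lies in a non-$S$ orbit, the generating relation of $\sim_S$ forces $a_l \sim_S a_m$. Because inputs from non-$S$ orbits act trivially modulo $\sim_S$, every word in the arc generators is equivalent to one involving only $S$-arcs, so $Q^*_S$ is generated by the classes of the $S$-arcs of $D$; the surviving relations are exactly those coming from the crossings of $D$ whose two strands lie on $S$-components, i.e.\ from the crossings of $D_S$. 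This matches the Wirtinger presentation of $Q_A(L_S)$ from $D_S$ on the nose.

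Because $f$ respects orbit labels, it carries each generating pair of $\sim_S$ on $Q_A(L)$ to a generating pair of the analogous congruence on $Q_A(L')$; hence $f$ descends to an orbit-preserving isomorphism $Q_A(L)/{\sim_S} \to Q_A(L')/{\sim_S}$. Restricting to the $S$-labeled sub-quandles and using the identifications with $Q_A(L_S)$ and $Q_A(L'_S)$ yields the required isomorphism. The main obstacle will be proving $Q^*_S \cong Q_A(L_S)$ rigorously: one must verify that $\sim_S$ imposes no extra identifications among the $S$-arcs beyond the merging coming from deleted-component over-crossings, and that no extra relations on the $S$-arcs are silently forced through non-$S$-orbit elements. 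Executing this cleanly may require an explicit word-rewriting argument in $Q_A(L)$, possibly combined with the module-level extension of $f$ to a Crowell equivalence supplied by Proposition \ref{backref}(c).
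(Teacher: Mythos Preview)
Your quandle-only route has a real gap at the step $Q^*_S \cong Q_A(L_S)$. The ``Wirtinger presentation'' argument you sketch would be appropriate for the fundamental quandle $Q(L_S)$, but $Q_A(L_S)$ is a proper quotient of $Q(L_S)$: it is a subset of the Alexander module $M_A(L_S)$, and extra identifications among arc classes are forced by the $\Lambda_{|S|}$-linear relations in $M_A(L_S)$, not just by crossing relations. So even if $Q^*_S$ were generated by the $S$-arcs subject to the $D_S$-crossing relations, that would give a map $Q(L_S)\to Q^*_S$, not the isomorphism you want. In the other direction, you do get a natural surjection $Q^*_S \twoheadrightarrow Q_A(L_S)$ (the module map $p$ setting $t_j\mapsto 1$ for $j\notin S$ kills your generating pairs), but injectivity is the real problem: two elements $x,y\in Q_A(L)_i$ with $x-y\in (t_j-1)M_A(L)$ certainly have $p(x)=p(y)$, yet there is no evident reason the purely quandle-level congruence $\sim_S$ must identify them. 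Your worry is stated backwards --- the danger is that $\sim_S$ identifies too \emph{little}, not too much.

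The paper sidesteps this entirely by never forming a quandle quotient. It works at the module level: one shows there is a $\Lambda_\mu$-linear epimorphism $p:M_A(L)\to M_A(L-K_\mu)$ whose kernel $N$ is generated by $(t_\mu-1)M_A(L)$ together with $Q_A(L)_\mu$ (so $N$ is described intrinsically in terms of $Q_A(L)$ and its orbit labeling). The Crowell equivalence $g:M_A(L)\to M_A(L')$ extending $f$ then visibly sends $N$ to $N'$, so it induces an isomorphism $M_A(L-K_\mu)\to M_A(L'-K'_\mu)$; a direct calculation shows $p$ respects $\triangleright,\triangleright^{-1}$ and $p(Q_A(L)_i)=Q_A(L-K_\mu)_i$ for $i<\mu$, whence the induced map restricts to a quandle isomorphism $Q_A(L-K_\mu)\cong Q_A(L'-K'_\mu)$. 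Induction finishes. Your final paragraph gestures at exactly this fallback; that is the argument you should actually carry out, and once you do, the congruence $\sim_S$ and the presentation comparison become unnecessary.
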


Combining Theorem \ref{main} with part 2 of Proposition \ref{lmsprop}, we obtain the following.

\begin{corollary}
\label{nocon}
Aside from indexing of link components, the fundamental multivariate Alexander quandle is a strictly stronger link invariant than Crowell's link module sequence. To be explicit: if $Q_A(L) \cong Q_A(L')$, then there are re-indexed versions of $L$ and $L'$ that are Crowell equivalent; but if $L$ and $L'$ are Crowell equivalent, they may have $Q_A(L) \centernot \cong Q_A(L')$.
\end{corollary}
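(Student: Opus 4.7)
The plan is to split the corollary into its two halves and assemble each from tools already in the excerpt. The forward direction (``not weaker'') is essentially immediate: if $Q_A(L)\cong Q_A(L')$ via a quandle isomorphism $f$, then Proposition \ref{backref}(a)--(b) gives $\mu=\mu'$ and a re-indexing of components compatible with $f$; Proposition \ref{backref}(c) then extends $f$ to a Crowell equivalence $g:M_A(L) \to M_A(L')$. So after re-indexing, $L$ and $L'$ are Crowell equivalent. No new work is required here.

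The strict (``strictly stronger'') direction asks for witnesses, and the natural plan is to combine Theorem \ref{main} with Proposition \ref{lmsprop} part 2. First, I would fix a pair of Crowell equivalent links produced by Proposition \ref{lmsprop} part 2; the candidates mentioned in the paper, such as $\{5^2_1,7^2_8\}$, $\{7^2_3,9^2_{46}\}$, or $\{6^3_2,9^3_{18}\}$, are to be verified in Sections \ref{examples} and \ref{moreexamples}. I would then argue by contradiction: suppose $Q_A(L) \cong Q_A(L')$ via some $f$. Proposition \ref{backref}(b) allows me to re-index $L$ and $L'$ compatibly with $f$, and Theorem \ref{main} supplies $Q_A(L_S) \cong Q_A(L'_S)$ for every $S \subseteq \{1,\dots,\mu\}$. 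Applying Proposition \ref{backref}(c) to each of these sublink isomorphisms yields Crowell equivalences $M_A(L_S)\to M_A(L'_S)$ for every $S$, with indexing inherited from the global re-indexing. If the chosen example has the property that no re-indexing makes all sublinks simultaneously Crowell equivalent, this is the desired contradiction, and hence $Q_A(L)\not\cong Q_A(L')$.

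The main obstacle is the subtlety that Proposition \ref{lmsprop} part 2 by itself only promises \emph{some} sublink pair that fails to be Crowell equivalent for \emph{some} fixed indexing, whereas the contradiction above needs failure for \emph{every} re-indexing induced by a hypothetical quandle isomorphism. I would dispose of this by choosing a two-component example and invoking the fact that Crowell equivalent knots have equal (one-variable) Alexander polynomials: it is enough to verify, in one of the cited pairs, that the multiset of Alexander polynomials of the two knot components of $L$ differs from that of $L'$, which rules out every possible pairing of components. This verification is a finite check that can be read off Rolfsen's tables and is carried out in the examples sections. Once it is in place, the corollary follows by combining the two halves.
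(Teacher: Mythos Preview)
Your proposal is correct and follows essentially the same route as the paper: the forward direction is Proposition \ref{backref}, and the strict direction is exactly the combination of Theorem \ref{main} with Proposition \ref{lmsprop} part 2 that the paper invokes. Your explicit handling of the re-indexing subtlety via the multiset of component Alexander polynomials is precisely what the paper's chosen example ($W$ versus $7^2_8$) realizes---both components of $W$ are unknots while $7^2_8$ has a trefoil component---so no bijection of components can pair them into Crowell-equivalent knots, and the contradiction goes through for every possible re-indexing.
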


This result may be surprising because according to the definition in \cite{mvaq1}, the quandle operations $\triangleright, \triangleright^{-1}$ of $Q_A(L)$ are given by formulas involving the Crowell map of $L$. It follows that a Crowell equivalence $f:M_A(L) \to M_A(L')$ yields a precise correspondence between the formulas that define $Q_A(L)$ and $Q_A(L')$. Nevertheless, it does not follow that $f$ defines a quandle isomorphism between $Q_A(L)$ and $Q_A(L')$, because $f$ can match the formulas without matching the elements of the two quandles. In Sec.\ 4 we verify that this is the situation for the examples of Sec.\ \ref{examples}.

Corollary \ref{nocon} completes the basic theory of multivariate Alexander quandles, by contradicting the converse of the implication denoted $2 \implies 3$ in \cite{mvaq1}. Note the contrast with \cite{mvaq2}, where we showed that the involutory medial quandle $IMQ(L)$ is equivalent (as a link invariant) to a simplified version of the link module sequence (\ref{lms}). 

Our results raise the possibility of strengthening some invariants of classical links associated with multivariate Alexander modules -- including the Alexander polynomials, Arf invariant, determinant, elementary ideals, linking numbers, Milnor $\bar \mu$-invariants, and others -- to reflect their connection with multivariate Alexander quandles. In particular, we wonder whether it is possible to use $Q_A(L)$ to produce a refined Alexander polynomial that distinguishes the links of Sec.\ \ref{examples}.

\section{Proposition \ref{lmsprop}}
\label{examples}

In this section we present a Crowell equivalence between Whitehead's link $W$ and the link denoted $7^2_8$ in Rolfsen's table \cite{Ro}. This equivalence gives us the second property of Proposition \ref{lmsprop}, because the components of the two links are not Crowell equivalent. Both components of $W$ are trivial, and one component of $7^2_8$ is a trefoil. As the Alexander polynomial of the trefoil is nontrivial, the Alexander module of the trefoil is not isomorphic to the Alexander module of the trivial knot. 

First, we recall how to obtain presentations of Alexander modules from link diagrams. Given a diagram $D$ of $L=K_1 \cup \dots \cup K_{\mu}$, let $A(D)$ and $C(D)$ be the sets of arcs and crossings of $D$. Let $\kappa_D:A(D) \to \{1, \dots, \mu \}$ be the function with $\kappa_D(a)=i$ if $a$ belongs to the image of $K_i$ in $D$, let $\Lambda_{\mu}^{A(D)}$ and $\Lambda_{\mu}^{C(D)}$ be the free $\Lambda_{\mu}$-modules on the sets $A(D)$ and $C(D)$, and let $\rho_D:\Lambda_{\mu}^{C(D)} \to \Lambda_{\mu}^{A(D)}$ be the $\Lambda_{\mu}$-linear map given by
\[
\rho_D(c)=(1-t_{\kappa_D(a_{\mathrm{left}})})a_{\mathrm{over}}+t_{\kappa_D(a_{\mathrm{over}})}a_{\mathrm{right}}-a_{\mathrm{left}}
\]
whenever $c \in C(D)$ is a crossing of $D$ as indicated in Fig.\ \ref{crossfig}.

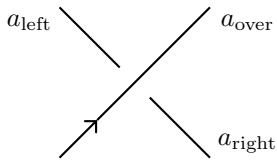
\begin{figure} [bht]
\centering
\begin{tikzpicture} [>=angle 90]
\draw [thick] (1,1) -- (-0.5,-0.5);
\draw [thick] [<-] (-0.5,-0.5) -- (-1,-1);
\draw [thick] (-1,1) -- (-.2,0.2);
\draw [thick] (0.2,-0.2) -- (1,-1);
\node at (1.5,0.8) {$a_{\mathrm{over}}$};
\node at (-1.4,0.8) {$a_{\mathrm{left}}$};
\node at (1.5,-0.8) {$a_{\mathrm{right}}$};
\end{tikzpicture}
\caption{A crossing.}
\label{crossfig}
\end{figure}

Then a presentation of the Alexander module $M_A(L)$ is given by an exact sequence 
\begin{equation*}
\Lambda_{\mu}^{C(D)} \xrightarrow{\rho_D} \Lambda_{\mu}^{A(D)} \xrightarrow{\gamma_D} M_A(L) \to 0 \text{,}
\end{equation*}
and the Crowell map $\phi_L:M_A(L) \to I_{\mu}$ is given by $\phi_L\gamma_D(a)=t_{\kappa_D(a)}-1$ $\forall a \in A(D)$. If $D$ and $D'$ are two diagrams of the same link $L$, then there is an isomorphism between the two resulting instances of the Alexander module $M_A(L)$, which is compatible with the map $\phi_L$. This invariance property follows from the fact that the link module sequence is derived from homology groups associated with the universal abelian cover of $\mathbb S ^3 - L$; it may also be verified using the Reidemeister moves, as in \cite{mvaq1}.

\subsection{Whitehead's link}
\label{wsec}

A diagram $D$ of Whitehead's link appears in Fig.\ \ref{wfig}. The set of arcs is $A(D)=\{a_1,a_2,a_3,a_4,a_5\}$, and the set of crossings is $C(D)=\{c_1,c_2,c_3,c_4,c_5\}$. To avoid cluttering the figure, crossing indices are not indicated explicitly. Instead, we adopt the convention that each crossing shares the index of the underpassing arc directed into that crossing; for instance, the central crossing of Fig.\ \ref{wfig} is $c_5$. The component function $\kappa_D:A(D) \to \{1,2\}$ has $\kappa_D(a_2)=\kappa_D(a_4)=\kappa_D(a_5)=1$ and $\kappa_D(a_1)=\kappa_D(a_3)=2$. 
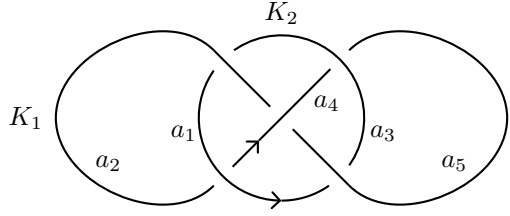
\begin{figure} [bt]
\centering
\begin{tikzpicture} 
\draw [thick] (-0.9,0.9) -- (-0.15,0.15);
\draw [thick] (0.15,-0.15) -- (0.9,-0.9);
\draw [thick] (0.65,0.65) -- (-0.3,-0.3);
\draw [thick] [->] [>=angle 90] (-0.65,-0.65) -- (-0.3,-0.3);
\draw [->] [>=angle 90] [thick, domain=-215:-90] plot ({(1.1)*cos(\x)}, {(1.1)*sin(\x)});
\draw [thick, domain=-90:-55] plot ({(1.1)*cos(\x)}, {(1.1)*sin(\x)});
\draw [thick, domain=-35:125] plot ({(1.1)*cos(\x)}, {(1.1)*sin(\x)});
\draw [thick] (0.9,0.9) to [out=45, in=90] (3,0);
\draw [thick] (3,0) to [out=-90, in=-45] (0.9,-0.9);
\draw [thick] (-0.9,0.9) to [out=135, in=90] (-3,0);
\draw [thick] (-3,0) to [out=-90, in=-135] (-0.9,-0.9);
\node at (0,1.4) {$K_2$};
\node at (-3.4,0) {$K_1$};
\node at (-1.3,-0.2) {$a_1$};
\node at (-2.3,-0.6) {$a_2$};
\node at (0.6,0.2) {$a_4$};
\node at (2.3,-0.6) {$a_5$};
\node at (1.35,-0.2) {$a_3$};
\end{tikzpicture}
\caption{Whitehead's link, $W$.}
\label{wfig}
\end{figure}

We proceed to simplify the presentation of $M_A(W)$ derived from $D$. The relations $\gamma_D\rho_D(c_3)$ $=0$ and $\gamma_D\rho_D(c_4)=0$ tell us that $\gamma_D(a_1)=(1-t_2)\gamma_D(a_2)+t_1\gamma_D(a_3)$ and $\gamma_D(a_4)=(1-t_1)\gamma_D(a_3)+t_2\gamma_D(a_5)$.
Using these formulas, we see that $M_A(W)$ is generated by $\gamma_D(a_2),\gamma_D(a_3)$ and $\gamma_D(a_5)$, subject to the following relations:
\begin{align*}
& \gamma_D\rho_D(c_1)=(1-t_2)\gamma_D(a_5)+t_1\gamma_D(a_3)-\gamma_D(a_1)
\\
& \qquad \qquad =(1-t_2)(\gamma_D(a_5)-\gamma_D(a_2))=0
\\
& \gamma_D\rho_D(c_2)=(1-t_1)\gamma_D(a_1)+t_2\gamma_D(a_2)-\gamma_D(a_4)
\\
& \qquad\qquad =(1-t_1+t_1t_2)\gamma_D(a_2)-(1-t_1)^2\gamma_D(a_3)-t_2 \gamma_D(a_5)=0
\\
& \gamma_D\rho_D(c_5)=(1-t_1)\gamma_D(a_4)+t_1\gamma_D(a_5)-\gamma_D(a_2)
\\
& \qquad\qquad =-\gamma_D(a_2)+(1-t_1)^2\gamma_D(a_3)+(t_1+t_2-t_1t_2)\gamma_D(a_5)=0.
\end{align*}
Rewriting the relations in terms of $\gamma_D(a_2),\gamma_D(a_3)$ and $x=\gamma_D(a_5)-\gamma_D(a_2)$, and adding the third relation to the second, we obtain the following:
\begin{align*}
& (1-t_2)x=0 \\   
& t_1(1-t_2)x=0 \\
& (-t_1t_2+t_1+t_2-1) \gamma_D(a_2) +(1-t_1)^2\gamma_D(a_3) +(t_1+t_2-t_1t_2)x=0.
\end{align*}
Subtracting the second relation from the third, and noticing that the second relation follows from the first, we conclude that these two relations suffice:
\begin{align*}
& (1-t_2)x=0 \\   
& (-t_1t_2+t_1+t_2-1) \gamma_D(a_2) +(1-t_1)^2\gamma_D(a_3) + t_2x=0.
\end{align*}
The second of these relations tells us that 
\[
x=t_2^{-1} \cdot ( (1-t_1)(1-t_2) \gamma_D(a_2) - (1-t_1)^2\gamma_D(a_3) ).
\]
It follows that $M_A(W)$ is generated by $\gamma_D(a_2)$ and $\gamma_D(a_3)$, subject to the single relation
\begin{equation}
\label{rel}
(1-t_2) \cdot ((1-t_1)(1-t_2) \gamma_D(a_2)- (1-t_1)^2 \gamma_D(a_3))=0.
\end{equation}

\subsection{The link \texorpdfstring{$7^2_8$}{7}}
\label{lsec}

Let $E$ be the diagram of the link $L=7^2_8$ depicted in Fig.\ \ref{otherfig}. Again, if $a_i$ is the underpassing arc oriented into a crossing, then the crossing is denoted $c_i$.

\begin{figure} [bht]
\centering
\begin{tikzpicture} 
\draw [thick] (-1,6) -- (1,6);
\draw [thick] (1,6) to [out=0, in=90] (2,5.2);
\draw [thick] (2,5.2) -- (2,4.2);
\draw [thick] (-1,6) to [out=180, in=90] (-2,5.2);
\draw [thick] (-2,5.2) -- (-2,4.2);
\draw [thick] (-0.8,4) -- (2.2,4);
\draw [thick] (2,3.8) -- (2,1.8);
\draw [thick] (2.2,4) to [out=0, in=0] (2.2,2);
\draw [thick] (1,1) to [out=0, in=-90] (2,1.8);
\draw [thick] (-1,1) to [out=180, in=-90] (-2,1.8);
\draw [thick] [->] [>=angle 90] (-1,1) -- (1,1);
\draw [thick] (-2,3.8) -- (-2,1.8);
\draw [thick] [->] [>=angle 90] (1,4.2) to [out=90, in=0] (0,5);
\draw [thick] (-1,4.2) to [out=90, in=180] (0,5);
\draw [thick] (-1,4.2) -- (-1,3.8);
\draw [thick] (-1,3.8) to [out=-90, in = 180] (1.8,2);
\fill [white] (0.3,2.5) rectangle (-0.6,1.5);
\draw [thick] (1,3.8) to [out=-90, in = 0] (-1.8,2);
\draw [thick] (-1.2,4) -- (-2.2,4);
\draw [thick] (-2.2,4) to [out=180, in=180] (-2.2,2);
\node at (2.4,5.5) {$K_2$};
\node at (3.2,3) {$K_1$};
\node at (0,1.2) {$a_1$};
\node at (-3.1,3) {$a_2$};
\node at (0,5.7) {$a_3$};
\node at (-0.9,1.8) {$a_4$};
\node at (0,4.7) {$a_5$};
\node at (1.2,1.8) {$a_6$};
\node at (0,3.7) {$a_7$};
\end{tikzpicture}
\caption{$L=7^2_8$.}
\label{otherfig}
\end{figure}

We eliminate four generators of $M_A(L)$ using the following formulas:
\begin{align*}
& \text{from } \rho_E(c_3): \gamma_E(a_3)=t_1^{-1} \cdot ((t_2-1)\gamma_E(a_2)+\gamma_E(a_1))\\
& \text{from } \rho_E(c_2): \gamma_E(a_4)=(1-t_1)\gamma_E(a_1)+t_2\gamma_E(a_2)\\
& \text{from } \rho_E(c_6): \gamma_E(a_6)=(1-t_1)\gamma_E(a_1)+t_2\gamma_E(a_7)\\
& \text{from } \rho_E(c_5): \gamma_E(a_5)=(1-t_1)\gamma_E(a_4)+t_1\gamma_E(a_6)\\
& \qquad\qquad\qquad\qquad =(1-t_1)\gamma_E(a_1)+(1-t_1)t_2\gamma_E(a_2)+t_1t_2\gamma_E(a_7).
\end{align*}

It follows that $M_A(L)$ is generated by $\gamma_E(a_1),\gamma_E(a_2)$ and $\gamma_E(a_7)$, subject to the following relations:
\begin{align*}
& \gamma_E\rho_E(c_1)=(1-t_2)\gamma_E(a_7)+t_1\gamma_E(a_3)-\gamma_E(a_1)
\\
& \quad =(1-t_2)(\gamma_E(a_7)-\gamma_E(a_2))=0
\\
& \gamma_E\rho_E(c_4)=(1-t_1)\gamma_E(a_7)+t_1\gamma_E(a_5)-\gamma_E(a_4)
\\
& \quad =(-1+2t_1-t_1^2)\gamma_E(a_1)+t_2(t_1-t_1^2-1)\gamma_E(a_2)+(1-t_1+t_1^2 t_2)\gamma_E(a_7)=0
\\
& \gamma_E\rho_E(c_7)=(1-t_1)\gamma_E(a_5)+t_1\gamma_E(a_2)-\gamma_E(a_7)\\
& \quad = (1-t_1)^2\gamma_E(a_1) +( (1-t_1)^2t_2+t_1)\gamma_E(a_2)+(t_1t_2-t_1^2t_2 -1) \gamma_E(a_7)=0.
\end{align*}

Notice that $\gamma_E\rho_E(c_7)=-t_1\gamma_E\rho_E(c_1)-\gamma_E\rho_E(c_4)$, so we can ignore the relation $\gamma_E\rho_E(c_7)=0$. $M_A(L)$ is generated by $\gamma_E(a_2)$, $y=\gamma_E(a_1)+\gamma_E(a_2)-\gamma_E(a_7)$ and $z=t_1(\gamma_E(a_2)-\gamma_E(a_7))$. When we rewrite $\gamma_E\rho_E(c_1)$ and $\gamma_E\rho_E(c_4)$ in terms of these generators we obtain the following:
\begin{align*}
& (1-t_2)z=0\\
& (1-t_1)(1-t_2)\gamma_E(a_2)-(1-t_1)^2y+(t_1(1-t_2)-1)z=0.
\end{align*}
As the first relation tells us that $(1-t_2)z=0$, the second relation can be replaced with $(1-t_1)(1-t_2)\gamma_E(a_2)-(1-t_1)^2y-z=0$, which tells us that $z=(1-t_1)(1-t_2)\gamma_E(a_2)-(1-t_1)^2y$. We conclude that $M_A(L)$ is generated by $\gamma_E(a_2)$ and $y$, subject to the single relation 
\[
(1-t_2) \cdot((1-t_1)(1-t_2)\gamma_E(a_2)-(1-t_1)^2y)=0.
\]

Comparing this with (\ref{rel}), we see that there is an isomorphism $f:M_A(W) \to M_A(L)$ of $\Lambda_2$-modules, given by $f (\gamma_D(a_2))=\gamma_E(a_2)$ and $f (\gamma_D(a_3))=y$. As $\phi_W(\gamma_D(a_2))=t_1-1=\phi_L(\gamma_E(a_2))$, $\phi_W (\gamma_D(a_3))=t_2-1$ and $\phi_L(y)=\phi_L(\gamma_E(a_1)+\gamma_E(a_2)-\gamma_E(a_7))=t_2-1+t_1-1-(t_1-1)=t_2-1$, $f$ is a Crowell equivalence.

\section{Proof of Theorem \ref{main}}
\label{mainproof}

Theorem \ref{main} is proven using the relationship between the Alexander modules of a link $L=K_1 \cup \dots \cup K_{\mu}$ and its sublink $L-K_{\mu}=K_1 \cup \dots \cup K_{\mu-1}$. This relationship can be traced back almost 70 years, to Torres' Ph.D. dissertation on the Alexander polynomial \cite{To}. The theory was elaborated in two later dissertations, of Sato \cite{Sa} and the present author \cite{Tr}. A thorough account is given by Hillman \cite[Chap.\ 5]{H}.

Every diagram $D$ of $L$ yields a diagram $D_{\mu}$ of $L-K_{\mu}$, obtained by removing all the arcs of $D$ belonging to the image of $K_{\mu}$, and replacing each crossing of $K_{\mu}$ over another component with a trivial crossing, as indicated in Fig.\ \ref{subfig}. There is a natural way to identify elements of $A(D_{\mu})$ with elements of $\{ a \in A(D) \mid \kappa_D(a)<\mu\}$; if $a \in A(D_{\mu})$ then we also use $a$ to denote the corresponding element of $A(D)$. Similarly, if $c$ is a crossing of $D$ in which $K_{\mu}$ is not the underpassing component then we also use $c$ to denote the corresponding crossing of $D_{\mu}$.

\begin{figure}[bht]
\centering
\begin{tikzpicture} [>=angle 90]
\draw [thick] (1,1) -- (-0.5,-0.5);
\draw [thick] [<-] (-0.5,-0.5) -- (-1,-1);
\draw [thick] (-1,1) -- (-.2,0.2);
\draw [thick] (0.2,-0.2) -- (1,-1);
\draw [thick] (5.6,0) -- (6,0.4);
\draw [thick] (5,1) -- (5.6,0.4);
\draw [thick] (6.6,-0.1) to [out=90, in=45] (6,0.4);
\draw [thick] (6.6,-0.1) to [out=-90, in=-225] (6.7,-0.7);
\draw [thick] (6.7,-0.7) -- (7,-1);
\draw [thick] (6,0) to [out=-45, in=0] (5.8,-0.8);
\draw [thick] (5.6,0) to [out=225, in=180] (5.8,-0.8);
\node at (1.4,0.8) {$K_{\mu}$};
\node at (-1.4,0.8) {$a_{\mathrm{left}}$};
\node at (1.5,-0.8) {$a_{\mathrm{right}}$};
\node at (4.6,0.8) {$a_{\mathrm{left}}$};
\node at (7.5,-0.8) {$a_{\mathrm{right}}$};
\end{tikzpicture}
\caption{A crossing of another component under $K_{\mu}$ in $D$ is replaced by a trivial crossing in $D_{\mu}$.}
\label{subfig}
\end{figure}
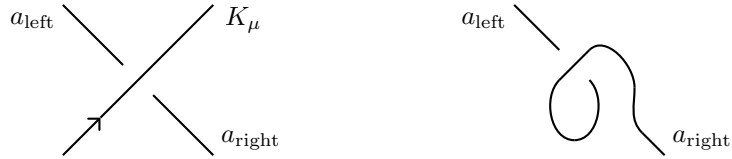
Notice that if $c$ is a crossing as pictured in Fig.\ \ref{subfig}, then the indicated ``left'' and ``right'' designations may not be accurate for $c$ in $D_{\mu}$. Depending on the orientation of that link component, $\rho_{D_{\mu}}(c)$ is either
\[
(1-t_{\kappa_{D_{\mu}}(a_{\mathrm{left}})})a_{\mathrm{right}}+t_{\kappa_{D_{\mu}}(a_{\mathrm{right}})}a_{\mathrm{right}}-a_{\mathrm{left}}=a_{\mathrm{right}}-a_{\mathrm{left}} 
\]
\[
\text{or   } (1-t_{\kappa_{D_{\mu}}(a_{\mathrm{right}})})a_{\mathrm{right}}+t_{\kappa_{D_{\mu}}(a_{\mathrm{right}})}a_{\mathrm{left}}-a_{\mathrm{right}}=t_{\kappa_{D_{\mu}}(a_{\mathrm{right}})} \cdot(a_{\mathrm{left}}-a_{\mathrm{right}}).
\]
Observe that in the second case, the submodule $\rho_{D_{\mu}}(\Lambda_{\mu-1}^{C(D_{\mu})})$ of $\Lambda_{\mu-1}^{A(D_{\mu})}$ is not changed if $\rho_{D_{\mu}}(c)$ is replaced with $a_{\mathrm{right}}-a_{\mathrm{left}}$.

Let $\pi:\Lambda_{\mu} \to \Lambda_{\mu-1}$ be the ring homomorphism given by $\pi(t_i)=t_i$ when $i<\mu$, and $\pi(t_{\mu})=1$. Then every $\Lambda_{\mu-1}$-module $M$ can also be considered as a $\Lambda_{\mu}$-module via $\pi$; that is, $\lambda \cdot x = \pi(\lambda) \cdot x$ $\forall \lambda \in \Lambda_{\mu}$ $\forall x \in M$. We use this observation in the following.

\begin{proposition}
\label{induct}
Let $D$ be a diagram of $L$, and $D_{\mu}$ the corresponding diagram of $L-K_{\mu}$. Let $N$ be the $\Lambda_{\mu}$-submodule of $M_A(L)$ generated by the set
\[
T=(t_{\mu}-1)M_A(L) \cup \{\gamma_D(a) \mid a \in A(D)\text{ and } \kappa_D(a)=\mu \}.
\]
Then there is an epimorphism $p:M_A(L) \to M_A(L-K_{\mu})$ of $\Lambda_{\mu}$-modules, with $\ker p = N$ and $p(\gamma_D(a))=\gamma_{D_{\mu}}(a)$ $\forall a \in A(D_{\mu})$.
\end{proposition}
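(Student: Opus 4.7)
The plan is to define $p$ by descending from a $\Lambda_{\mu}$-linear map on the free module $\Lambda_{\mu}^{A(D)}$, and then to prove $\ker p = N$ by constructing an explicit inverse for the induced map $\bar p : M_A(L)/N \to M_A(L-K_{\mu})$.

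First I would define $\tilde p : \Lambda_{\mu}^{A(D)} \to M_A(L-K_{\mu})$ on free generators by $\tilde p(a) = \gamma_{D_{\mu}}(a)$ when $\kappa_D(a) < \mu$ and $\tilde p(a) = 0$ when $\kappa_D(a) = \mu$, viewing $M_A(L-K_{\mu})$ as a $\Lambda_{\mu}$-module via $\pi$. To factor $\tilde p$ through $\gamma_D$, I would verify $\tilde p \rho_D(c) = 0$ for each $c \in C(D)$ by case analysis on how $K_{\mu}$ meets $c$. If no arc at $c$ lies in $K_{\mu}$, then the formula for $\rho_D(c)$ matches that for $\rho_{D_{\mu}}(c)$ term by term (all indices are below $\mu$, so $\pi$ acts trivially). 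If the understrand lies in $K_{\mu}$, then $\tilde p(a_{\mathrm{left}}) = \tilde p(a_{\mathrm{right}}) = 0$, and the coefficient $1 - \pi(t_{\kappa_D(a_{\mathrm{left}})}) = 1 - \pi(t_{\mu}) = 0$ kills the remaining term. If only the overstrand lies in $K_{\mu}$, then $\tilde p(a_{\mathrm{over}}) = 0$ and the remaining terms evaluate to $\gamma_{D_{\mu}}(a_{\mathrm{right}}) - \gamma_{D_{\mu}}(a_{\mathrm{left}})$, which is zero in $M_A(L-K_{\mu})$ since the trivial crossing in $D_{\mu}$ (as discussed just before the proposition) enforces exactly this identification.

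This yields $p$, and surjectivity is immediate because the elements $\gamma_{D_{\mu}}(a)$ generate $M_A(L-K_{\mu})$. The inclusion $N \subseteq \ker p$ is direct: $p$ kills the listed generators of $N$ by construction, and $p((t_{\mu}-1)x) = (\pi(t_{\mu})-1)p(x) = 0$. For the reverse inclusion, I would build an inverse $q : M_A(L-K_{\mu}) \to M_A(L)/N$ by the symmetric construction. Since $(t_{\mu}-1)M_A(L) \subseteq N$, the quotient $M_A(L)/N$ inherits a natural $\Lambda_{\mu-1}$-module structure, so I can set $\tilde q(a) = \gamma_D(a) + N$ on the free generators of $\Lambda_{\mu-1}^{A(D_{\mu})}$ and check $\tilde q \rho_{D_{\mu}}(c) = 0$ for each $c \in C(D_{\mu})$. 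A routine check on generators then shows $\bar p$ and $q$ are mutually inverse.

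The main obstacle is the case for $q$ in which $c$ arose from an overcrossing of $K_{\mu}$, so $\rho_{D_{\mu}}(c)$ is equal (up to a unit in $\Lambda_{\mu-1}$) to $a_{\mathrm{right}} - a_{\mathrm{left}}$. Here I would invoke the original relation $\gamma_D \rho_D(c) = 0$ in $M_A(L)$, which with $\kappa_D(a_{\mathrm{over}}) = \mu$ rearranges to
\[
\gamma_D(a_{\mathrm{left}}) - \gamma_D(a_{\mathrm{right}}) = (1 - t_{\kappa_D(a_{\mathrm{left}})})\gamma_D(a_{\mathrm{over}}) + (t_{\mu} - 1)\gamma_D(a_{\mathrm{right}}).
\]
The right-hand side lies in $N$ because $\gamma_D(a_{\mathrm{over}})$ is one of the listed generators of $N$ and $(t_{\mu}-1)\gamma_D(a_{\mathrm{right}}) \in (t_{\mu}-1)M_A(L) \subseteq N$. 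For crossings of $D_{\mu}$ that come from crossings of $D$ with no arc in $K_{\mu}$, the relation $\tilde q\rho_{D_{\mu}}(c) = 0$ follows at once from $\gamma_D \rho_D(c) = 0$, so the verification is complete and $\ker p = N$.
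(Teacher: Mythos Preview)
Your argument is correct. The construction of $p$ and the case analysis showing it is well defined coincide with the paper's, as do surjectivity and the inclusion $N \subseteq \ker p$. Where you diverge is in establishing $\ker p \subseteq N$: the paper computes $\ker p$ directly as $\gamma_D(\ker P)$, where $P:\Lambda_{\mu}^{A(D)}\to\Lambda_{\mu-1}^{A(D_{\mu})}$ is the obvious map on free modules, and then argues that every element of $\ker P$ lies in the submodule generated by $(t_{\mu}-1)\Lambda_{\mu}^{A(D)}$ together with the $K_{\mu}$-arcs. You instead build an explicit inverse $q:M_A(L-K_{\mu})\to M_A(L)/N$ and check it is well defined on relations; the only nontrivial case, the trivial crossing arising from a $K_{\mu}$-overcrossing, you dispatch exactly by rewriting $\gamma_D\rho_D(c)=0$ modulo $N$. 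Your route is a little more symmetric and avoids the coordinate argument in the free module; the paper's route keeps everything upstairs in $\Lambda_{\mu}^{A(D)}$ and never needs to verify that a backward map respects the sublink relations. Both are short, and neither requires any idea the other lacks.
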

\begin{proof}
Let $P:\Lambda_{\mu}^{A(D)} \to \Lambda_{\mu-1}^{A(D_{\mu})}$ be the $\Lambda_{\mu}$-linear map with $P(a)=a$ $\forall a \in A(D_{\mu})$, and $P(a)=0$ $\forall a \in A(D)$ with $\kappa_D(a)=\mu$. We claim that $\ker (\gamma_{D_{\mu}}P)$ contains $\rho_D(c)$ for every crossing $c$ of $D$. 

Suppose $c$ is a crossing of $D$ that does not involve $K_{\mu}$. Then $c$ also appears in the diagram $D_{\mu}$, and $\rho_{D_{\mu}}(c)$ has the same form as $\rho_D(c)$; hence $\gamma_{D_{\mu}}P(\rho_D(c))=\gamma_{D_{\mu}}(\rho_{D_{\mu}}(c))=0$ in $M_A(L-K_{\mu})$. 

Suppose $c$ is a crossing of $D$ as pictured in Fig.\ \ref{crossfig}, in which $K_{\mu}$ is the underpassing component. Then
\[
\gamma_{D_{\mu}}P(\rho_D(c))=
\gamma_{D_{\mu}}P((1-t_{\kappa_D(a_{\mathrm{left}})})a_{\mathrm{over}}+t_{\kappa_D(a_{\mathrm{over}})}a_{\mathrm{right}}-a_{\mathrm{left}})
\]
\[
=(1-t_{\mu})\gamma_{D_{\mu}}(a_{\mathrm{over}})+t_{\kappa_D(a_{\mathrm{over}})} \cdot 0 -0=\gamma_{D_{\mu}}(a_{\mathrm{over}})-\gamma_{D_{\mu}}(a_{\mathrm{over}})+0-0=0.
\]

Now, suppose $c$ is a crossing of $D$ as pictured on the left in Fig.\ \ref{subfig}, in which $K_{\mu}$ is the overpassing component, and not the underpassing component. Then
\[
\gamma_{D_{\mu}}P(\rho_D(c))=
\gamma_{D_{\mu}}P((1-t_{\kappa_D(a_{\mathrm{left}})})a_{\mathrm{over}}+t_{\kappa_D(a_{\mathrm{over}})}a_{\mathrm{right}}-a_{\mathrm{left}})
\]
\[
=(1-t_{\kappa_{D_{\mu}}(a_{\mathrm{left}})}) \cdot 0+t_{\mu}\gamma_{D_{\mu}}(a_{\mathrm{right}})-\gamma_{D_{\mu}}(a_{\mathrm{left}})=\gamma_{D_{\mu}}(a_{\mathrm{right}})-\gamma_{D_{\mu}}(a_{\mathrm{left}}).
\]
As noted before the statement of the proposition, $a_{\mathrm{right}}-a_{\mathrm{left}} \in \rho_{D_{\mu}}(\Lambda_{\mu-1}^{C(D_{\mu})})$; hence $\gamma_{D_{\mu}}(a_{\mathrm{right}})-\gamma_{D_{\mu}}(a_{\mathrm{left}})=\gamma_{D_{\mu}}(a_{\mathrm{right}} - a_{\mathrm{left}})=0$. 

Thus $\rho_D(c) \in \ker (\gamma_{D_{\mu}}P)$ $\forall c \in C(D)$, as claimed. The image of $\rho_D$ is the kernel of $\gamma_D$, so the claim tells us that $\gamma_{D_{\mu}}P$ factors through $\gamma_D$. That is, there is a $\Lambda_{\mu}$-linear map $p:M_A(L) \to M_A(L-K_{\mu})$, with $\gamma_{D_{\mu}}P=p\gamma_D$. The image of $p$ includes $\gamma_{D_{\mu}}(a)$ for every $a \in A(D_{\mu})$; these elements generate $M_A(L-K_{\mu})$, so $p$ is surjective.

To complete the proof, we verify that $N=\ker p$. The inclusion $N \subseteq \ker p$ follows from two facts: $t_{\mu} \cdot x = x$ $\forall x \in M_A(L-K_{\mu})$, and $P(a)=0$ $\forall a \in A(D)$ with $\kappa_D(a)=\mu$. 

For the inclusion $N \supseteq \ker p$, notice that the equality $\gamma_{D_{\mu}}P=p\gamma_D$ implies
\[
\ker p = \gamma_D(\ker (\gamma_{D_{\mu}}P))=\gamma_D(P^{-1}(\ker \gamma_{D_{\mu}}))=\gamma_D(P^{-1}(\rho_{D_{\mu}}(\Lambda_{\mu-1}^{C(D_{\mu})})).
\]
For each crossing $c \in C(D_{\mu})$, $P(\rho_D(c))=\rho_{D_{\mu}}(c)$. Therefore $P^{-1}(\rho_{D_{\mu}}(\Lambda_{\mu-1}^{C(D_{\mu})}))$ is the submodule of $\Lambda_{\mu}^{A(D)}$ generated by $(\ker P) \cup \{\rho_D(c) \mid c \in C(D_{\mu}) \}$, and $\ker p$ is the image of this submodule under $\gamma_D$. Of course $\gamma_D(\rho_D(c))=0$ $\forall c \in C(D_{\mu})$, so it follows that $\ker p = \gamma_D(\ker P)$. 

We now claim that $\ker P$ is contained in the submodule of $\Lambda_{\mu}^{A(D)}$ generated by the set
\[
\widehat T=(t_{\mu}-1)\Lambda_{\mu}^{A(D)} \cup \{a \mid a \in A(D)\text{ and } \kappa_D(a)=\mu \}.
\]
If $x \in \ker P$ then as $\Lambda_{\mu}^{A(D)}$ is a free $\Lambda_{\mu}$-module, there is a unique function $f:A(D) \to \Lambda_{\mu}$ such that 
\[
x = \sum_{a \in A(D)} f(a)a \quad \text{and hence} \quad P(x)=\sum_{a \in A(D)} \pi f(a)P(a)=\sum_{a \in A(D_{\mu})} \pi f(a)a.
\]
As $\Lambda_{\mu-1}^{A(D_{\mu})}$ is a free $\Lambda_{\mu-1}$-module, $P(x)=0$ only if $\pi f(a)=0$ for each individual $a \in A(D_{\mu})$. This requires that for every $a \in A(D_{\mu})$,  $f(a)$ is an element of the ideal of $\Lambda_{\mu}$ generated by $t_{\mu}-1$. It follows that $x$ is an element of the submodule generated by $\widehat T$, as claimed.

As $\ker p = \gamma_D(\ker P)$, the claim implies that $\ker p \subseteq N$. The opposite inclusion was already verified, so $\ker p = N$. \end{proof}

Now, recall from \cite{mvaq1} that the quandle operations of $Q_A(L)$ are given by 
\[
x \triangleright y = (\phi_L(y)+1)x-\phi_L(x)y \quad \text{    and    } \quad x \triangleright^{-1} y=(\phi_L(y)+1)^{-1} \cdot (x+\phi_L(x)y).
\]
These operations are not restricted to $Q_A(L)$; $\triangleright$ is defined on all of $M_A(L)$, and $\triangleright^{-1}$ is defined whenever $\phi_L(y)+1$ is a unit of $\Lambda_{\mu}$. $Q_A(L)$ is the smallest subset of $M_A(L)$ that contains $\gamma_D(A(D))$ and is closed under the operations $\triangleright, \triangleright^{-1}$. An \emph{orbit} in $Q_A(L)$ is a minimal nonempty subset $X \subseteq Q_A(L)$ such that $x \triangleright y$, $x \triangleright^{-1}y \in X$ $\forall x \in X$ $\forall y \in Q_A(L)$. $Q_A(L)$ has $\mu$ orbits, one for each component of $L$. The $K_i$ orbit contains $\gamma_D(a)$ for every $a \in A(D)$ with $\kappa_D(a)=i$; we denote this orbit $Q_A(L)_i$. As 
\begin{align*}
&\quad \phi_L(x \triangleright y) = (\phi_L(y)+1)\phi_L(x)-\phi_L(x)\phi_L(y)=\phi_L(x) 
\\
& \text{and} \quad \phi_L(x \triangleright^{-1} y) = (\phi_L(y)+1)^{-1} \cdot (\phi_L(x)+\phi_L(x)\phi_L(y))
\\
&\qquad \qquad =(\phi_L(y)+1)^{-1} \cdot \phi_L(x) \cdot (1+\phi_L(y))=\phi_L(x) \text{,}
\end{align*}
the value of $\phi_L$ is constant on each orbit $Q_A(L)_i$; the constant value is $t_i-1$.

We say that the \emph{length} of an element $x \in Q_A(L)$ is $1$ more than the smallest number of applications of $\triangleright$ and $\triangleright^{-1}$ needed to obtain $x$ from elements of $\gamma_D(A(D))$. In particular, $x$ is of length $1$ if and only if  $x \in \gamma_D(A(D))$.
\begin{lemma}
\label{qgen}
Let $N$ be the $\Lambda_{\mu}$-submodule of $M_A(L)$ mentioned in Proposition \ref{induct}. Then $N$ is generated by the set $\widetilde T=(t_{\mu}-1)M_A(L) \cup Q_A(L)_{\mu}.$
\end{lemma}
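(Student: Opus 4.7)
The plan is to prove the two inclusions $\langle T \rangle \subseteq \langle \widetilde T \rangle$ and $\langle \widetilde T \rangle \subseteq \langle T \rangle$ separately, where angle brackets denote the $\Lambda_\mu$-submodule generated by a set. The first inclusion is immediate: every element $\gamma_D(a)$ with $\kappa_D(a)=\mu$ has length $1$, hence lies in $Q_A(L)_{\mu}$, so $T \subseteq \widetilde T$. The content of the lemma is therefore the reverse inclusion $Q_A(L)_{\mu} \subseteq N$, and I would prove this by induction on the length of an element $x \in Q_A(L)_{\mu}$.

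For the base case, elements of length $1$ in $Q_A(L)_{\mu}$ are of the form $\gamma_D(a)$ with $\kappa_D(a)=\mu$, and these generate $T$ by definition, so they lie in $N$. For the inductive step, suppose $x \in Q_A(L)_{\mu}$ has length $n>1$. Then $x = y \triangleright z$ or $x = y \triangleright^{-1} z$ for some $y \in Q_A(L)$ of length $<n$ and some $z \in Q_A(L)$. Because the calculation preceding the lemma shows $\phi_L$ is constant on each orbit, and because $x \triangleright y$ and $x \triangleright^{-1} y$ keep $x$ in its own orbit, we must have $y \in Q_A(L)_{\mu}$ too. Hence by the inductive hypothesis $y \in N$. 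The key calculation is then
\[
x = y \triangleright z = (\phi_L(z)+1)y - \phi_L(y)z = (\phi_L(z)+1)y - (t_{\mu}-1)z,
\]
whose first summand lies in $N$ because $y$ does and $N$ is a $\Lambda_\mu$-submodule, and whose second summand lies in $(t_{\mu}-1)M_A(L) \subseteq T \subseteq N$. The $\triangleright^{-1}$ case is analogous: writing $z \in Q_A(L)_i$, we have $\phi_L(z)+1 = t_i$, which is a unit in $\Lambda_\mu$, so
\[
x = y \triangleright^{-1} z = t_i^{-1}\bigl(y + (t_{\mu}-1)z\bigr) \in N.
\]

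I do not anticipate a significant obstacle; the only subtlety is recognizing that the orbit-constancy of $\phi_L$ turns $\phi_L(y)$ into the scalar $t_\mu - 1$ exactly when we need it, which is what lets the $\phi_L(y)z$ term be absorbed into $(t_{\mu}-1)M_A(L)$ regardless of which orbit $z$ lives in. Once that observation is made, the induction runs mechanically and both closure operations $\triangleright, \triangleright^{-1}$ are handled uniformly.
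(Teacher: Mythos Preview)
Your proposal is correct and follows essentially the same approach as the paper: both argue by induction on length that $Q_A(L)_\mu \subseteq N$, using the key observation that $\phi_L$ takes the constant value $t_\mu - 1$ on $Q_A(L)_\mu$, so that the term $\phi_L(y)z$ lands in $(t_\mu - 1)M_A(L) \subseteq N$. The only cosmetic difference is that the paper states the inductive step as ``if $x \in N \cap Q_A(L)_\mu$ and $y \in Q_A(L)$ then $x \triangleright y,\, x \triangleright^{-1} y \in N$,'' while you phrase it by decomposing an element of length $n$; the computations are identical.
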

\begin{proof}
As $Q_A(L)_{\mu}$ contains $\gamma_D(a)$ for every arc $a \in A(D)$ with $\kappa_D(a)=\mu$, $\widetilde T$ contains the set $T$ of Proposition \ref{induct}. 

If $x \in N$ is an element of $Q_A(L)_{\mu}$ and $y$ is any element of $Q_A(L)$, then as $(t_{\mu}-1)M_A(L) \subseteq N$,
\[
x \triangleright y = (\phi_L(y)+1)x-\phi_L(x)y = (\phi_L(y)+1)x-(t_{\mu}-1)y \in N \quad \text{  and}
\]
\[
x \triangleright^{-1} y=(\phi_L(y)+1)^{-1} \cdot (x+\phi_L(x)y)=(\phi_L(y)+1)^{-1} \cdot (x+(t_{\mu}-1)y) \in N.
\]
It follows that if $\ell \geq 1$ and $N$ contains all the elements of $Q_A(L)_{\mu}$ of length $\leq \ell$, then $N$ also contains all the elements of $Q_A(L)_{\mu}$ of length $ \ell +1$. As $N$ contains $\gamma_D(a)$ for every $a \in A(D)$ with $\kappa_D(a)=\mu$ -- that is, $N$ contains all the elements of $Q_A(L)_{\mu}$ of length $1$ -- it follows by induction that $N$ contains all the elements of $Q_A(L)_{\mu}$. Therefore $T \subseteq \widetilde T \subseteq N$.
\end{proof}

We deduce the following.
\begin{proposition}
\label{torres}
Let $L=K_1 \cup \dots \cup K_{\mu}$ and $L'=K'_1 \cup \dots \cup K'_{\mu'}$ be links, and let $f:Q_A(L) \to Q_A(L')$ be a quandle isomorphism. Proposition \ref{backref} tells us that $\mu=\mu'$, and we can re-index the components of $L$ and $L'$ so that $f$ extends to a Crowell equivalence $g:M_A(L) \to M_A(L')$. If $N \subseteq M_A(L)$ and $N' \subseteq M_A(L')$ are the submodules mentioned in Proposition \ref{induct} and Lemma \ref{qgen}, then $g(N)=N'$.
\end{proposition}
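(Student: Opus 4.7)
The plan is to leverage Lemma \ref{qgen} to reduce the question to checking what $g$ does on a specific generating set. By Lemma \ref{qgen}, $N$ is the $\Lambda_\mu$-submodule of $M_A(L)$ generated by $\widetilde T = (t_\mu - 1) M_A(L) \cup Q_A(L)_\mu$, and applying the same lemma to $L'$, $N'$ is generated by $\widetilde T' = (t_\mu - 1)M_A(L') \cup Q_A(L')_\mu$. Since $g$ is a $\Lambda_\mu$-linear isomorphism, it will suffice to show $g(\widetilde T) = \widetilde T'$.

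First I would handle the ``module'' piece of the generating set. Because $g$ is $\Lambda_\mu$-linear and surjective, $g\bigl((t_\mu-1) M_A(L)\bigr) = (t_\mu-1)\, g(M_A(L)) = (t_\mu - 1) M_A(L')$. Next I would handle the ``quandle'' piece. Here I use the assumption that the components of $L$ and $L'$ have been re-indexed compatibly with $f$, which by definition means that $f$ sends the $K_i$ orbit of $Q_A(L)$ to the $K'_i$ orbit of $Q_A(L')$ for every $i$; in particular $f(Q_A(L)_\mu) = Q_A(L')_\mu$. Since $g$ extends $f$ as a map on $Q_A(L) \subseteq M_A(L)$, we get $g(Q_A(L)_\mu) = f(Q_A(L)_\mu) = Q_A(L')_\mu$.

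Combining these two equalities gives $g(\widetilde T) = \widetilde T'$. Because $g$ is a $\Lambda_\mu$-module isomorphism, it carries the submodule generated by $\widetilde T$ onto the submodule generated by $g(\widetilde T)$, so $g(N) = N'$, as required.

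There is essentially no obstacle here once Lemma \ref{qgen} is in hand; the proof is just bookkeeping about how the Crowell equivalence $g$ acts on the two pieces of the generating set. The one point that deserves explicit attention is the role of the compatible re-indexing: without it, $f$ would only permute the $\mu$ orbits of $Q_A(L)$ among the orbits of $Q_A(L')$ in some unknown way, and we would not be able to conclude that the $\mu$-th orbits correspond.
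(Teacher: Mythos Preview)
Your proof is correct and follows essentially the same approach as the paper: both use Lemma \ref{qgen} to pass to the generating set $\widetilde T$, check that $g$ maps $(t_\mu-1)M_A(L)$ onto $(t_\mu-1)M_A(L')$ by $\Lambda_\mu$-linearity, and that $g(Q_A(L)_\mu)=Q_A(L')_\mu$ because $g$ extends $f$ and the indexing is compatible. Your closing remark about the necessity of the compatible re-indexing is a nice clarification not made explicit in the paper.
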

\begin{proof}
After $L$ and $L'$ are re-indexed compatibly with $f$, $f$ will map $Q_A(L)_i$ onto $Q_A(L')_i$, for each $i \in \{1, \dots, \mu \}$. Hence $g(Q_A(L)_{\mu})=Q_A(L')_{\mu}$. As $g$ is an isomorphism of $\Lambda_{\mu}$-modules, it must be that $g((t_{\mu}-1)M_A(L))=(t_{\mu}-1)M_A(L')$. It follows that if $\widetilde T \subseteq M_A(L)$ and $\widetilde T' \subseteq M_A(L')$ are the subsets discussed in Lemma \ref{qgen}, then $g(\widetilde T)= \widetilde T'$.
\end{proof}

\begin{lemma}
\label{phinduct} 
Let $p:M_A(L) \to M_A(L-K_{\mu})$ be the epimorphism of Proposition \ref{induct}. Then $\pi \phi_L = \phi_{(L-K_{\mu})}p:M_A(L) \to I_{\mu-1}$, $p(Q_A(L)_{\mu})= \{ 0 \}$, and $p(Q_A(L)_i)=Q_A(L-K_{\mu})_i$ for $1 \leq i < \mu$.
\end{lemma}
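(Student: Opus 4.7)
The plan is to dispose of the three assertions in order, using the $\Lambda_\mu$-linearity of $p$ (from Proposition \ref{induct}) together with the explicit quandle-operation formulas recalled just before the lemma.

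For the identity $\pi\phi_L = \phi_{L-K_\mu}p$, I would observe that both sides are $\Lambda_\mu$-linear maps from $M_A(L)$ to $I_{\mu-1}$, where $I_{\mu-1}$ carries the $\Lambda_\mu$-structure induced by $\pi$, so it suffices to compare them on the arc generators $\gamma_D(a)$. If $\kappa_D(a)=i<\mu$, Proposition \ref{induct} gives $p(\gamma_D(a))=\gamma_{D_\mu}(a)$, so $\phi_{L-K_\mu}p(\gamma_D(a))=t_i-1=\pi(t_i-1)=\pi\phi_L(\gamma_D(a))$. If $\kappa_D(a)=\mu$, then $\gamma_D(a)\in N=\ker p$, so the composition on the left vanishes, while $\pi\phi_L(\gamma_D(a))=\pi(t_\mu-1)=0$ as well.

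The second assertion is immediate from Lemma \ref{qgen}: $Q_A(L)_\mu\subseteq \widetilde T\subseteq N=\ker p$.

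For the third assertion I would establish both inclusions by induction on the length of a quandle element. At length $1$, $p(\gamma_D(a))=\gamma_{D_\mu}(a)\in Q_A(L-K_\mu)_i$ when $\kappa_D(a)=i<\mu$. For the inductive step, for $x\in Q_A(L)_i$ and $y\in Q_A(L)$, $\Lambda_\mu$-linearity of $p$ combined with the first assertion yields
\[
p(x\triangleright y)=(\phi_L(y)+1)p(x)-\phi_L(x)p(y)=(\phi_{L-K_\mu}(p(y))+1)p(x)-\phi_{L-K_\mu}(p(x))p(y),
\]
and an analogous identity for $\triangleright^{-1}$. When $y\in Q_A(L)_j$ with $j<\mu$, the right-hand side is literally $p(x)\triangleright p(y)\in Q_A(L-K_\mu)_i$ by the inductive hypothesis on $x$ and $y$. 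When $y\in Q_A(L)_\mu$, the second assertion gives $p(y)=0$ and $\phi_L(y)+1=t_\mu$ acts as $1$ on $M_A(L-K_\mu)$, so the expression collapses to $p(x)$, still in $Q_A(L-K_\mu)_i$ by induction; the $\triangleright^{-1}$ case is identical, using $t_\mu^{-1}=1$ on $M_A(L-K_\mu)$. The reverse inclusion $Q_A(L-K_\mu)_i\subseteq p(Q_A(L)_i)$ follows from a parallel induction: every length-$1$ element $\gamma_{D_\mu}(a)$ is the image of the matching arc generator $\gamma_D(a)\in Q_A(L)_i$, and the lifted identity $p(x)\triangleright p(y)=p(x\triangleright y)$ allows us to promote each quandle combination in $Q_A(L-K_\mu)$ to one in $Q_A(L)$.

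The only non-routine point is the degenerate case of the induction where $y$ lies in the $K_\mu$-orbit: the quandle formulas of $Q_A(L-K_\mu)$ do not directly involve an element $y$ with $p(y)=0$, so one has to notice that the offending scalar $\phi_L(y)+1=t_\mu$ becomes a unit acting trivially on $M_A(L-K_\mu)$, and hence the image degenerates harmlessly to $p(x)$ and stays in the correct orbit. Once that observation is in place, everything is a generator-and-induction check.
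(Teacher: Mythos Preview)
Your proof is correct and follows essentially the same approach as the paper: check the first identity on arc generators, invoke Lemma~\ref{qgen} for the second assertion, and show that $p$ respects the quandle operations so that orbits map to orbits. The only difference is cosmetic: you split the inductive step for $p(x\triangleright y)$ into the cases $y\in Q_A(L)_j$ with $j<\mu$ and $y\in Q_A(L)_\mu$, whereas the paper observes once and for all that $\triangleright$ and $\triangleright^{-1}$ are defined on the whole module (wherever the unit condition holds) and that $p$ is a homomorphism of these operations in general; the ``degenerate'' case $p(y)=0$ is then absorbed automatically since $p(x)\triangleright 0=p(x)$, so no separate analysis is needed.
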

\begin{proof}
If $a \in A(D_{\mu})$ then $p(\gamma_D(a))=\gamma_{D_{\mu}}(a)$, so
\[
\pi \phi_L(\gamma_D(a)) = t_{\kappa_D(a)}-1 =  \phi_{(L-K_{\mu})}(\gamma_{D_{\mu}}(a))=\phi_{(L-K_{\mu})}(p(\gamma_D(a))).
\]
On the other hand, if $a \in A(D)$ has $\kappa_D(a)=\mu$ then $\gamma_D(a) \in N = \ker p$, so
\[
\pi \phi_L(\gamma_D(a)) = \pi(t_{\mu}-1) = 0 =  \phi_{(L-K_{\mu})}(0)=\phi_{(L-K_{\mu})}(p(\gamma_D(a))).
\]

We see that $\pi \phi_L(\gamma_D(a))=\phi_{(L-K_{\mu})}(p(\gamma_D(a)))$ $\forall a \in A(D)$. As $M_A(L)$ is generated by $\gamma_D(A(D))$, it follows that $\pi \phi_L = \phi_{(L-K_{\mu})}p$.

Of course $p(Q_A(L)_{\mu})=\{0\}$ follows immediately from Lemma \ref{qgen}.

Now, notice that if $x,y \in M_A(L)$ then 
\begin{align*}
& p(x \triangleright y) = p((\phi_L(y)+1)x-\phi_L(x)y)= p((\phi_L(y)+1)x)-p(\phi_L(x)y)
\\
& \qquad = \pi (\phi_L(y)+1) p(x)-\pi (\phi_L(x))p(y)
\\
& \qquad =  (\phi_{(L-K_{\mu})}(p(y)) +1) p(x) -\phi_{(L-K_{\mu})}(p(x))p(y)=p(x) \triangleright p(y) \text{,} 
\end{align*}
and if $\phi_L(y)+1$ is a unit of $\Lambda_{\mu}$ then
\begin{align*}
& p(x \triangleright^{-1} y)=p((\phi_L(y)+1)^{-1} \cdot (x+\phi_L(x)y))
\\
& \qquad =\pi((\phi_L(y)+1)^{-1}) \cdot (p(x)+\pi(\phi_L(x))p(y))
\\
& \qquad =(\pi(\phi_L(y))+1)^{-1} \cdot (p(x)+\pi(\phi_L(x))p(y))
\\
& \qquad =(\phi_{(L-K_{\mu})}(p(y))+1)^{-1} \cdot (p(x)+\phi_{(L-K_{\mu})}(p(x))p(y)) = p(x) \triangleright^{-1} p(y).
\end{align*}
That is, $p$ is a homomorphism of the operations $\triangleright$ and $\triangleright^{-1}$.

For each $i \in \{1, \dots, \mu-1 \}$, $Q_A(L)_i$ is the smallest subset of $M_A(L)$ that contains $\{\gamma_D(a) \in A(D) \mid \kappa_D(a)=i \}$ and has $x \triangleright \gamma_D(a) , x \triangleright^{-1} \gamma_D(a) \in Q_A(L)_i$ $\forall x \in Q_A(L)_i$ $\forall a \in A(D)$. The orbit $Q_A(L-K_{\mu})_i$ is described in a similar way, using $D_{\mu}$ rather than $D$. As $p( \gamma_D(a))= \gamma_{D_{\mu}}(a)$ $\forall a \in A(D_{\mu})$ and $p$ is a homomorphism of $\triangleright$ and $\triangleright^{-1}$, it follows that $p(Q_A(L)_i)=Q_A(L-K_{\mu})_i$. \end{proof}

\begin{corollary}
\label{qtorres}
Suppose $f:Q_A(L) \to Q_A(L')$ is a quandle isomorphism, and $L$ and $L'$ are indexed compatibly with $f$. Then $Q_A(L-K_{j}) \cong Q_A(L'-K'_{j})$ for every $j \in \{1, \dots, \mu \}$.
\end{corollary}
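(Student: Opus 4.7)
The plan is to reduce to the case $j = \mu$ by applying the permutation that swaps $j$ and $\mu$ to both $L$ and $L'$; since the same permutation is used on both sides, the re-indexed links remain indexed compatibly with $f$, so it suffices to prove $Q_A(L-K_\mu) \cong Q_A(L'-K'_\mu)$. First I would invoke Proposition \ref{backref}(c) to extend $f$ to a Crowell equivalence $g: M_A(L) \to M_A(L')$. Letting $p:M_A(L) \to M_A(L-K_\mu)$ and $p':M_A(L') \to M_A(L'-K'_\mu)$ be the epimorphisms from Proposition \ref{induct} with kernels $N$ and $N'$, Proposition \ref{torres} gives $g(N) = N'$, so $g$ descends to a $\Lambda_\mu$-module isomorphism $\bar g: M_A(L-K_\mu) \to M_A(L'-K'_\mu)$ characterized by $\bar g \circ p = p' \circ g$. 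Because $(t_\mu - 1)$ annihilates both quotients, the $\Lambda_\mu$-action factors through $\pi$, so $\bar g$ is equally a $\Lambda_{\mu-1}$-module isomorphism.

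Next I would verify that $\bar g$ is a Crowell equivalence between the sublinks. Applying $\pi$ to the identity $\phi_L = \phi_{L'} g$ yields $\pi \phi_L = (\pi \phi_{L'}) g$; combining this with the identity $\pi \phi_L = \phi_{L-K_\mu} p$ from Lemma \ref{phinduct} and its analogue for $L'$ gives $\phi_{L-K_\mu} \, p = \phi_{L'-K'_\mu}\, p' g = \phi_{L'-K'_\mu}\, \bar g \, p$, and surjectivity of $p$ cancels to produce $\phi_{L-K_\mu} = \phi_{L'-K'_\mu}\, \bar g$.

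Finally, for the quandle conclusion I would check that $\bar g$ carries $Q_A(L-K_\mu)$ onto $Q_A(L'-K'_\mu)$. For each $i < \mu$, Lemma \ref{phinduct} gives $p(Q_A(L)_i) = Q_A(L-K_\mu)_i$ and $p'(Q_A(L')_i) = Q_A(L'-K'_\mu)_i$. Using $g|_{Q_A(L)} = f$ and the compatibility $f(Q_A(L)_i) = Q_A(L')_i$, the chase
\[
\bar g(Q_A(L-K_\mu)_i) = \bar g p(Q_A(L)_i) = p' g(Q_A(L)_i) = p' f(Q_A(L)_i) = p'(Q_A(L')_i) = Q_A(L'-K'_\mu)_i
\]
shows $\bar g(Q_A(L-K_\mu)) = Q_A(L'-K'_\mu)$. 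Since $\bar g$ is a module isomorphism with $\phi_{L-K_\mu} = \phi_{L'-K'_\mu}\, \bar g$, and the operations $\triangleright, \triangleright^{-1}$ are defined entirely in terms of the module structure and the Crowell map, $\bar g$ automatically intertwines these operations on all of $M_A(L-K_\mu)$, so its restriction to $Q_A(L-K_\mu)$ is the desired quandle isomorphism.

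There is no deep obstacle; the argument is essentially a diagram chase, but it requires two distinct ingredients to mesh. What has to fit together is the \emph{kernel-preservation} supplied by Proposition \ref{torres}, which depends on $f$ being a quandle (not merely module) isomorphism, and the \emph{Crowell-map descent} supplied by Lemma \ref{phinduct}, which depends on the diagram-level epimorphism $p$. Without the first, $\bar g$ cannot be defined; without the second, one cannot show $\bar g$ is a Crowell equivalence on the sublinks, which is precisely what allows $\bar g$ to transport the sub-quandle structure.
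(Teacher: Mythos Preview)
Your proof is correct and follows essentially the same route as the paper: reduce to $j=\mu$, extend $f$ to a Crowell equivalence $g$ via Proposition \ref{backref}, use Proposition \ref{torres} to pass to the quotient, and then use Lemma \ref{phinduct} to match orbits and quandle operations. The only cosmetic difference is that you explicitly verify $\bar g$ is a Crowell equivalence (i.e., $\phi_{L-K_\mu}=\phi_{L'-K'_\mu}\bar g$) and invoke this to conclude preservation of $\triangleright,\triangleright^{-1}$, whereas the paper computes $g(x\triangleright y)=g(x)\triangleright g(y)$ and $p(x\triangleright y)=p(x)\triangleright p(y)$ directly and chases through the preimage $p^{-1}(Q_A(L-K_\mu)_i)=N+Q_A(L)_i$; both arguments are equivalent.
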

\begin{proof} It suffices to verify the corollary when $j=\mu$. According to Propositions \ref{backref} and \ref{torres}, there is a Crowell equivalence $g:M_A(L) \to M_A(L')$, which extends $f$ and has $g(N)=N'$. Then $g$ induces an isomorphism $h:M_A(L)/N \to M_A(L')/N'$. The epimorphisms $p:M_A(L) \to M_A(L-K_{\mu})$ and $p':M_A(L') \to M_A(L'-K'_{\mu})$ of Proposition \ref{induct} have kernels $N$ and $N'$, so they induce isomorphisms $q:M_A(L)/N \to M_A(L-K_{\mu})$ and $q':M_A(L')/N' \to M_A(L'-K'_{\mu})$.

Then $q'hq^{-1}:M_A(L-K_{\mu}) \to M_A(L'-K'_{\mu})$ is an isomorphism of $\Lambda_{\mu}$-modules. We claim that $q'hq^{-1}$ restricts to a quandle isomorphism between $Q_A(L-K_{\mu})$ and $Q_A(L'-K'_{\mu})$. 

To verify the claim, note first that if $1 \leq i < \mu$ then $q'hq^{-1}(Q_A(L-K_{\mu})_i)=p'gp^{-1}(Q_A(L-K_{\mu})_i)$. By Proposition \ref{induct} and Lemma \ref{phinduct}, 
\[
p^{-1}(Q_A(L-K_{\mu})_i)=(\ker p) + Q_A(L)_i=N+ Q_A(L)_i.
\]
As $L$ and $L'$ have been indexed compatibly with $f$, and $g$ extends $f$, $g(Q_A(L)_i)=f(Q_A(L)_i)=Q_A(L')_i$. Proposition \ref{torres} tells us that $g(N)=N'$, so
\[
gp^{-1}(Q_A(L-K_{\mu})_i)=g(N+ Q_A(L)_i)=N'+Q_A(L')_i.
\]
Applying Proposition \ref{induct} and Lemma \ref{phinduct} to $p'$, we conclude that
\[
q'hq^{-1}(Q_A(L-K_{\mu})_i)=p'gp^{-1}(Q_A(L-K_{\mu})_i)
\]
\[
=p'(N'+Q_A(L')_i)=p'(Q_A(L')_i)=Q_A(L'-K'_{\mu})_i.
\]
This verifies part of the claim: $q'hq^{-1}$ restricts to a bijection between $Q_A(L-K_{\mu})$ and $Q_A(L'-K'_{\mu})$.

To complete the proof of the claim, recall that the proof of Lemma \ref{phinduct} includes the equalities $p(x \triangleright y)=p(x) \triangleright p(y)$ (valid for all $x,y \in M_A(L)$) and $p(x \triangleright^{-1} y)=p(x) \triangleright^{-1} p(y)$ (valid so long as $\phi_L(y)+1$ is a unit). Also $\phi_{L'}g=\phi_L$, because $g$ is a Crowell equivalence. Hence if $x,y \in M_A(L)$ then
\[
g(x \triangleright y)= g((\phi_L(y)+1)x-\phi_L(x)y)= (\phi_L(y)+1)g(x)-\phi_L(x)g(y)
\]
\[
= (\phi_{L'}(g(y))+1)g(x)-\phi_{L'}(g(x))g(y)=g(x) \triangleright g(y)
\]
and if $\phi_L(y)+1$ is a unit,
\[
g(x \triangleright^{-1} y)= g((\phi_L(y)+1)^{-1} \cdot (x+\phi_L(x)y))= (\phi_L(y)+1)^{-1} \cdot (g(x)+\phi_L(x)g(y))
\]
\[
= (\phi_{L'}(g(y))+1)^{-1} \cdot (g(x)+\phi_{L'}(g(x))g(y))=g(x) \triangleright^{-1} g(y).
\]
It follows that the restriction of $q'hq^{-1}$ to a map $Q_A(L-K_{\mu}) \to Q_A(L'-K'_{\mu})$ is a quandle homomorphism. It's bijective, so it's an isomorphism.
\end{proof}

Theorem \ref{main} follows from Corollary \ref{qtorres}, using induction.

\section{Distinguishing \texorpdfstring{$W$}{W} from \texorpdfstring{$7^2_8$}{7} using colorings}
\label{moreexamples}

The fact that $W$ and $L=7^2_8$ have non-isomorphic $Q_A$ quandles follows immediately from Proposition \ref{backref} and Theorem \ref{main}, as the components of the two links have non-isomorphic Alexander modules. For the purpose of illustration, though, we present in this section a direct proof of the fact that even though $M_A(W)$ and $M_A(L)$ are isomorphic, there is no isomorphism $f:M_A(W) \to M_A(L)$ with $f(Q_A(W))=Q_A(L)$. This proves that $Q_A(W) \not \cong Q_A(L)$ because according to Proposition \ref{backref}, if $Q_A(W)$ and $Q_A(L)$ were isomorphic, an isomorphism between them would extend to an isomorphism between the Alexander modules. 

The idea of the direct proof is to detect the difference between the trivial components of $W$ and the trefoil component of $L$ using multivariate Alexander colorings, i.e., $\Lambda_2$-module homomorphisms with $M_A(W)$ and $M_A(L)$ as domains \cite{Tcol}. Recall that a Fox coloring of a knot is, in essence, a homomorphism from the knot's Alexander module to a $\Lambda_{1}$-module $M$ such that $(t_1+1) M = 0$. An unknot has determinant 1 and a trefoil has determinant 3, so if $3M=0 \neq M$ the trefoil will have some nonconstant Fox colorings in $M$, but the unknot will have none. (The connection between Fox colorings and the determinant is well known; see \cite{N} for instance.) Now, suppose $M$ is a nontrivial $\Lambda_2$-module with $(t_1+1) M = 3M = (t_2-1)M = 0$. Then according to Lemma \ref{qgen}, a $\Lambda_2$-linear map $M_A(W) \to M$ whose kernel contains $Q_A(W)_2$ will induce a homomorphism $M_A(K_1) \to M$, where $K_1$ is the first component of $W$; the same reasoning applies to $L=7^2_8$, of course. This reasoning predicts that $L$ will have multivariate Alexander colorings in $M$ that are 0 on the $K_2$ orbit of $Q_A(L)$, and nonconstant on the $K_1$ orbit; but $W$ will have no such coloring.

We proceed to verify these predictions in detail. Let $GF(3)$ be the field with three elements, let $\chi:\Lambda_2 \to GF(3)$ be the homomorphism of rings with unity given by $\chi(t_1) = -1$ and $\chi(t_2)=1$, and let $GF(3)_{\chi}$ be the $\Lambda_2$-module obtained from $GF(3)$ using $\chi$. That is, $\lambda \cdot x = \chi(\lambda) \cdot x$ $\forall \lambda \in \Lambda_2$ $\forall x \in GF(3)$.

\begin{proposition}
\label{wprop}
Let $g:M_A(W) \to GF(3)_{\chi}$ be a  $\Lambda_2$-linear map that is constant on the orbit $Q_A(W)_2$. Then $g$ is also constant on the orbit $Q_A(W)_1$.
\end{proposition}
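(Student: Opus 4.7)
The plan is to parametrize all $\Lambda_2$-linear maps $g : M_A(W) \to GF(3)_\chi$ explicitly and then trace the constancy hypothesis through to $Q_A(W)_1$. Because $\chi(1-t_2) = 0$, the single defining relation~(\ref{rel}) of the presentation of $M_A(W)$ obtained in Sec.\ \ref{wsec} holds vacuously in $GF(3)_\chi$, so such a $g$ is determined freely by the two scalars $\alpha := g(\gamma_D(a_2))$ and $\beta := g(\gamma_D(a_3))$. The back-substitutions made in Sec.\ \ref{wsec}, combined with $\chi(t_1) = -1$, $\chi(t_2) = 1$, and $\chi(1-t_1) = -1$, then give
\[
g(\gamma_D(a_1)) = -\beta, \qquad g(\gamma_D(a_5)) = \alpha - \beta, \qquad g(\gamma_D(a_4)) = \alpha + \beta.
\]

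The orbit $Q_A(W)_2$ contains the length-$1$ elements $\gamma_D(a_1)$ and $\gamma_D(a_3)$, with $g$-values $-\beta$ and $\beta$, so constancy on $Q_A(W)_2$ immediately forces $\beta = 0$ in $GF(3)$. Substituting $\beta = 0$, the five length-$1$ values collapse to $\alpha$ on each arc of $K_1$ and to $0$ on each arc of $K_2$; thus $g$ is already constant with value $\alpha$ on the length-$1$ part of $Q_A(W)_1$ and with value $0$ on the length-$1$ part of $Q_A(W)_2$.

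To promote these equalities to the whole orbits, I would induct on length, using the formulas $x \triangleright y = (\phi_W(y)+1)x - \phi_W(x)y$ and $x \triangleright^{-1} y = (\phi_W(y)+1)^{-1}(x + \phi_W(x)y)$. The essential observation is that after applying $\chi$ these formulas take only a handful of shapes: $\chi(\phi_W(x))$ is $1$ if $x \in Q_A(W)_1$ and $0$ if $x \in Q_A(W)_2$, while $\chi(\phi_W(y)+1)$ is $-1$ or $1$ in the same two cases (and, importantly, is a unit in either). Running through the four orbit-combinations for each of $\triangleright$ and $\triangleright^{-1}$, substituting the inductive hypothesis $g(Q_A(W)_1) = \{\alpha\}$ and $g(Q_A(W)_2) = \{0\}$, and using the identity $-2\alpha = \alpha$ in $GF(3)$, each of the eight cases collapses to the value predicted by the orbit of the output. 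The main obstacle is simply the bookkeeping of these cases; once they are dispatched, the induction closes and $g$ is constant with value $\alpha$ on all of $Q_A(W)_1$.
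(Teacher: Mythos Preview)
Your argument is correct and follows essentially the same route as the paper: compute the $g$-values on the five length-$1$ generators, use constancy on $Q_A(W)_2$ to force $\beta=0$ (equivalently the paper's $g_1=g_3=0$), conclude that $g$ is constant on the length-$1$ part of $Q_A(W)_1$, and then induct on length via the $\triangleright,\triangleright^{-1}$ formulas. The only cosmetic differences are that you reach the length-$1$ values by back-substituting into the two-generator presentation of Sec.~\ref{wsec} rather than reading the five crossing relations directly, and that you carry along the (redundant) inductive claim $g(Q_A(W)_2)=\{0\}$---this is already given by the hypothesis once $\beta=0$ is known, so only the four cases with $x\in Q_A(W)_1$ actually need checking.
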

\begin{proof}
Let $D$ be the diagram of $W$ pictured in Fig.\ \ref{wfig}. For convenience we write $g(\gamma_D(a_i))=g_i$ for $1 \leq i \leq 5$. As $g$ is $\Lambda_2$-linear, every crossing $c$ of $D$ has $g(\gamma_D(\rho_D(c)))=0$. Considering the crossings of $D$ in order, we conclude that the following elements of $GF(3)$ are all $0$.
\[
-g_3-g_1,-g_1+g_2-g_4,-g_3-g_1,-g_3+g_5-g_4, \text{ and }   -g_4-g_5-g_2
\]
As $g$ is constant on $Q_A(W)_2$, $g_1=g_3$. Then $-g_3-g_1=0$ implies $0=-2g_1$. As $-2=1$ in $GF(3)$, it follows that the constant value of $g$ on $Q_A(W)_2$ is $0$. Therefore the second and fourth elements displayed above are $g_2-g_4$ and $g_5-g_4$; they equal $0$, so $g_2=g_4=g_5$. That is, $g$ is constant on the length $1$ elements of $Q_A(W)_1$.

The argument proceeds using induction on length. Suppose $\ell \geq 1$ and $g$ is constant on the elements of $Q_A(W)_1$ of length $\leq \ell$. Suppose $x$ and $y$ are elements of $Q_A(W)_1$, with $g(x)=g(y)=g_2$. Then 
\[
g(x \triangleright y)=g((\phi_W(y)+1)x-\phi_W(x)y)
=g(t_1x-(t_1-1)y)
\]
\[
=t_1g(x)-(t_1-1)g(y)=\chi(t_1-(t_1-1)) \cdot g_2 = 1 \cdot g_2 = g_2 \text{  and}
\]
\[
g(x \triangleright^{-1} y)=(\phi_W(y)+1)^{-1} \cdot (g(x)+\phi_W(x)g(y))
\]
\[
=(\phi_W(y)+1)^{-1} (1+\phi_W(x)) \cdot g_2 = 1 \cdot g_2 = g_2.
\]

Also, if $x$ is an element of $Q_A(W)_1$ with $g(x)=g_2$ and $y$ is an element of $Q_A(W)_2$ then $y \in \ker g$, so
\[
g(x \triangleright y)=g((\phi_W(y)+1)x-\phi_W(x)y)
=g(t_2x-(t_1-1)y)
\]
\[
=t_2g(x)-(t_1-1)g(y)=\chi(t_2) \cdot g(x)-\chi(t_1-1) \cdot 0=1 \cdot g_2 = g_2 \text{  and}
\]
\[
g(x \triangleright^{-1} y)=(\phi_W(y)+1)^{-1} \cdot (g(x)+\phi_W(x)g(y))
\]
\[
=t_2^{-1} \cdot (g_2 + (t_1-1) \cdot 0) = \chi(t_2^{-1}) \cdot g_2=1 \cdot g_2 = g_2.
\]
\end{proof}

For the sake of argument, suppose there is a $\Lambda_2$-module isomorphism $f:M_A(W) \to M_A(L)$ such that $f(Q_A(W))=Q_A(L)$. According to Proposition \ref{backref}, the components of $W$ may be re-indexed compatibly with $f$. As the components of $W$ are interchanged by a symmetry of the link, we may presume that $f$ is a Crowell equivalence with the component indices indicated in Figs.\ \ref{wfig} and \ref{otherfig}. Then $\phi_Lf=\phi_W$, so $f$ maps $Q_A(W)_1$ to $Q_A(L)_1$, and $f$ maps $Q_A(W)_2$ to $Q_A(L)_2$. It follows that Proposition \ref{wprop} applies to $M_A(L)$ and $Q_A(L)$. 

In reality, though, Proposition \ref{wprop} does not apply to $M_A(L)$ and $Q_A(L)$. Refer to the diagram $E$ pictured in Fig.\ \ref{otherfig}. The values $g(\gamma_E(a_1))=g(\gamma_E(a_3))=g(\gamma_E(a_6))=g(\gamma_E(a_7))=0$, $g(\gamma_E(a_2))=g(\gamma_E(a_4))=1$ and $g(\gamma_E(a_5))=-1$ satisfy all the crossing relations from $E$, so they define a $\Lambda_2$-linear map $g:M_A(L) \to GF(3)_{\chi}$. As $g(\gamma_E(a_2)) \neq g(\gamma_E(a_4))$, $g$ is not constant on $Q_A(L)_1$. However, an inductive argument much like the proof of Proposition \ref{wprop} can be used to verify that $\ker g$ contains $Q_A(L)_2$. 

There are other ways to use these ideas to show that $Q_A(W) \not \cong Q_A(L)$. For instance, the reader will have little trouble showing that Proposition \ref{wprop} holds for $L$ if the component indices in $L$ are interchanged. As Proposition \ref{wprop} does not hold for $L$ with its original component indices, it follows that there is no automorphism of $Q_A(L)$ that interchanges $Q_A(L)_1$ and $Q_A(L)_2$. The quandle $Q_A(W)$, instead, certainly has an automorphism that interchanges the two orbits, because the link $W$ has a symmetry that interchanges the two components. Thus $Q_A(L)$ and $Q_A(W)$ are distinguished by the actions of their automorphism groups.

\end{document}